\numberwithin{equation}{section}
\newcommand{\stirling}[2]{\genfrac{[}{]}{0pt}{}{#1}{#2}}
\newcommand{\stirlingsec}[2]{\genfrac{\{}{\}}{0pt}{}{#1}{#2}}
\newcommand{\stirlingb}[2]{B\hspace*{-0.2mm}\big[{#1},{#2}\big]}
\newcommand{\stirlingsecb}[2]{B\hspace*{-0.2mm}\big\{{#1},{#2}\big\}}
\newcommand{\N}{\mathbb{N}}
\newcommand{\R}{\mathbb{R}}
\newcommand{\1}{\mathbbm{1}}
\newcommand{\eps}{\varepsilon}
\newcommand*\xbar[1]{%
   \hbox{%
     \vbox{%
       \hrule height 0.5pt 
       \kern0.25ex
       \hbox{%
         \kern-0.05em
         \ensuremath{#1}%
         \kern-0.1em
       }%
     }%
   }%
}
\DeclareMathOperator{\supp}{supp}
\DeclareMathOperator{\sgn}{sgn}
\DeclareMathOperator{\lin}{lin}
\DeclareMathOperator{\conv}{conv}
\DeclareMathOperator{\pos}{pos}
\DeclareMathOperator{\inte}{int}
\DeclareMathOperator{\relint}{relint}
\DeclareMathOperator{\rank}{rank}
\newcommand{\cA}{\mathcal{A}}
\newcommand{\cB}{\mathcal{B}}
\newcommand{\cC}{\mathcal{C}}
\newcommand{\cF}{\mathcal{F}}
\newcommand{\cL}{\mathcal{L}}
\newcommand{\cN}{\mathcal{N}}
\newcommand{\cP}{\mathcal{P}}
\newcommand{\cR}{\mathcal{R}}
\newcommand{\cT}{\mathcal{T}}
\renewcommand{\P}{\mathbb{P}}
\newcommand{\aff}{\mathop{\mathrm{aff}}\nolimits}
\newcommand{\ind}{\mathbbm{1}}
\theoremstyle{plain}
\newtheorem{theorem}{Theorem}[section]
\newtheorem{lemma}[theorem]{Lemma}
\newtheorem{corollary}[theorem]{Corollary}
\newtheorem{proposition}[theorem]{Proposition}
\theoremstyle{definition}
\theoremstyle{remark}
\begin{document}

\author{Thomas Godland}
\address{Thomas Godland: Institut f\"ur Mathematische Stochastik,
Westf\"alische Wilhelms-Universit\"at M\"unster,
Orl\'eans-Ring 10,
48149 M\"unster, Germany}
\email{thomas.godland@uni-muenster.de}

\author{Zakhar Kabluchko}
\address{Zakhar Kabluchko: Institut f\"ur Mathematische Stochastik,
Westf\"alische {Wilhelms-Uni\-ver\-sit\"at} M\"unster,
Orl\'eans--Ring 10,
48149 M\"unster, Germany}
\email{zakhar.kabluchko@uni-muenster.de}

\title[Projections and angle sums]{Projections and angle sums of belt polytopes and permutohedra}

\keywords{Permutohedra, belt polytopes, $f$-vector, projections, normal fans, polyhedral cones, conic intrinsic volumes, Grassmann angles, Stirling numbers, hyperplane arrangements, Weyl chambers, reflection arrangements, characteristic polynomials, zonotopes}

\subjclass[2010]{Primary: 52A22, 60D05. Secondary: 11B73, 51F15, 52B05, 52B11, 52A55}


\begin{abstract}
Let $P\subset \R^n$ be a belt polytope, that is a polytope whose normal fan coincides with the fan of some hyperplane arrangement $\mathcal A$. Also, let $G:\R^n\to\R^d$ be a linear map of full rank whose kernel is in general position with respect to the faces of $P$.  We derive a formula for the number of $j$-faces of the ``projected'' polytope $GP$ in terms of the $j$-th level characteristic polynomial of $\mathcal A$. In particular, we  show that the face numbers of $GP$ do not depend on the linear map $G$ provided a general position assumption is satisfied. Furthermore, we derive formulas for the sum of the conic intrinsic volumes and Grassmann angles of the tangent cones of $P$ at all of its $j$-faces.  We apply these results to permutohedra of types $A$ and $B$, which yields closed formulas for the face numbers of projected permutohedra and the generalized angle sums of permutohedra in terms of Stirling numbers of both kinds and their $B$-analogues.
\end{abstract}

\maketitle

\section{Introduction}


In the work of Donoho and Tanner~\cite{Donoho2009} the following interesting statement can be found: The number of $j$-faces of the image of the $n$-dimensional cube $[0,1]^n$ under a linear map $G:\R^n\to\R^d$ does not depend on the choice of $G$ provided $G$ is in ``general position''. That is to say,  the cube is an equiprojective polytope as defined by Shephard~\cite{Shephard1967}. More precisely, by~\cite[Eq.~(1.6)]{Donoho2009} we have
\begin{equation}\label{eq:donoho_tanner}
f_j(G [0,1]^n)=2\binom{n}{j}\sum_{\ell=n-d}^{n-j-1}\binom{n-j-1}{\ell};
\end{equation}
for all  $0\le j < d\le n$, where $f_j(P)$ denotes the number of $j$-faces of a polytope $P$, and $G [0,1]^n$ is the image of $[0,1]^n$ under $G$.


We will show that this result can be generalized to any polytope $P\subset \R^n$ whose normal fan, that is the set of normal cones at all faces  of $P$, coincides with the fan of some linear hyperplane arrangement $\cA$. These polytopes are called \emph{belt polytopes}; see~\cite[page~226]{Ziegler_LecturesOnPolytopes}, \cite{bolker}, \cite{baladze_solution_engl}, \cite[Chapter~VII]{boltyanski_book}, \cite{backman}. It is known~\cite[Theorem~7.16]{Ziegler_LecturesOnPolytopes} that zonotopes, that is Minkowski sums of finitely many line segments, are special cases of belt polytopes. One simple example is the cube $[0,1]^n$ appearing in the formula~\eqref{eq:donoho_tanner}.  Another examples of belt polytopes are  permutohedra of types $A$ and $B$ which are defined by
\begin{align*}
\cP_n^A:=\cP_n^A(x_1,\dots,x_n):=\conv\{(x_{\sigma(1)},\dots,x_{\sigma(n)}):\sigma\in\text{Sym}(n)\big\}
\end{align*}
and
\begin{align*}
\cP_n^B:=\cP_n^B(x_1,\dots,x_n):=\conv\{(\eps_1x_{\sigma(1)},\dots,\eps_nx_{\sigma(n)}):(\eps_1,\dots,\eps_n)\in\{\pm 1\}^n,\sigma\in\text{Sym}(n)\big\}
\end{align*}
for  a point $(x_1,\dots,x_n)\in\R^n$. Here, $\text{Sym}(n)$ denotes the group of all permutations of the set $\{1,\dots,n\}$.
The faces and normal fans of the permutohedra can be  characterized in terms of reflection arrangements of types $A_{n-1}$ and $B_n$, respectively. This will allow us to obtain analogues of~\eqref{eq:donoho_tanner} for permutohedra.

The paper is organized as follows.
Section~\ref{section:preliminaries} introduces the necessary notation and some well-known definitions and results from convex and integral geometry. In Section~\ref{section:main_results}, we state the main results in the setting of belt polytopes.
In Section~\ref{section:permutohedra}, we specialize these results to permutohedra. In the same section,  we recall various characterizations of permutohedra, their faces and normal fans. Also, we show that permutohedra are zonotopes only in some rare exceptional cases, namely if the numbers $x_1,\ldots,x_n$ form an arithmetic sequence. Proofs are given in Sections~\ref{sec:proofs:belt_poly} and~\ref{sec:proofs_permutoh}.


\section{Preliminaries}\label{section:preliminaries}

In this section, we are going to introduce necessary facts and notation from convex and integral geometry. These facts are well-known and can be skipped at first reading.

\subsection{Facts from convex geometry}
For a set $M\subset \R^n$ denote by $\lin M$ and $\aff M$ the linear hull and the affine hull of $M$, respectively. They are defined as the minimal linear and the minimal affine subspace of $\R^n$ containing $M$, respectively. Equivalently, $\lin M$ can be defined as the set of all linear combinations of elements in $M$, while $\aff M$ can be defined as the set of all affine combinations of elements in $M$. Similarly, the convex hull of $M$ is denoted by $\conv M$ and defined as the minimal convex set containing $M$, or, equivalently,
\begin{align*}
\conv M:=\big\{\lambda_1x_1+\ldots+\lambda_mx_m:m\in\N,x_1,\dots,x_m\in M,\lambda_1+\ldots+\lambda_m\ge 0,\lambda_1+\ldots +\lambda_m=1\big\}.
\end{align*}
The positive hull of a set $M$ is denoted by $\pos M$ and defined as
\begin{align*}
\pos M:=\big\{\lambda_1x_1+\ldots+\lambda_mx_m:m\in\N,x_1,\dots,x_m\in M,\lambda_1,\ldots,\lambda_m\ge 0\big\}.
\end{align*}
The relative interior of a set $M$ is the set of all interior points of $M$ relative to its affine hull $\aff M$ and it is denoted by $\relint M$. The set of interior points of $M$ is denoted by $\inte M$.

A \emph{polyhedral set} is an intersection of finitely many closed half-spaces (whose boundaries need not pass through the origin). A bounded polyhedral set is called \emph{polytope}. Equivalently, a polytope can be defined as the convex hull of a finite set of points. A \emph{polyhedral cone} (or just cone) is an intersection of finitely many closed half-spaces whose boundaries contain the origin and therefore a special case of the polyhedral sets. Equivalently, a polyhedral cone can be defined as the positive hull of a finite set of points. The dimension of a polyhedral set $P$ is defined as the dimension of its affine hull $\aff P$.

A supporting hyperplane for a polyhedral set $P\subset \R^n$ is an affine hyperplane $H$ with the property that $H\cap P\neq\varnothing$ and $P$ lies entirely in one of the closed half-spaces bounded by $H$. A \emph{face} of a polyhedral set $P$ (of arbitrary dimension) is a set of the form $F=P\cap H$, for a supporting hyperplane $H$, or the set $P$ itself. Equivalently, the faces of a polyhedral set $P$ are obtained by replacing some of the half-spaces, whose intersection defines the polyhedral set, by their boundaries and taking the intersection. The set of all  faces of $P$ is denoted by $\cF(C)$ and the set of all $k$-dimensional faces (or just $k$-faces) of $P$ by $\cF_k(P)$ for $k\in\{0,\dots,n\}$. The number of $k$-faces of $P$ is denoted by $f_k(C):=\#\cF_k(C)$. In general, the number of elements in a set $M$ is denoted by $|M|$ or $\#M$.  The $0$-dimensional faces are called \emph{vertices}. In the case of a cone, the only possible vertex is the origin.

The \emph{dual cone $C^\circ$} (or polar cone) of a cone $C\subset\R^n$ is defined as
\begin{align*}
C^\circ:=\{v\in\R^n:\langle v,x\rangle\le 0\;\forall x\in C\}.
\end{align*}
The \emph{tangent cone $T_F(P)$} of a polyhedral set $P\subset\R^n$ at a face $F$ of $P$ is defined by
\begin{align*}
T_F(P)=\{x\in\R^n:f_0+\eps x\in P\text{ for some }\eps>0\},
\end{align*}
where $f_0$ is any point in the relative interior of $F$. This definition does not depend on the choice of $f_0$. The \emph{normal cone} of $P$ at the face $F$ is defined as the dual of the tangent cone, that is
$
N_F(P):=T_F(P)^\circ.
$



\subsection{Grassmann angles and conic intrinsic volumes}\label{section:grassmann}

Now, we are going to introduce some important geometric functionals of cones. Let $C\subset \R^n$ be a cone and $g$ be an $n$-dimensional standard Gaussian  vector. Then, the \emph{$k$-th conic intrinsic volume} of $C$ is defined as
\begin{align*}
\upsilon_k(C):=\sum_{F\in\cF_k(C)}\P(\Pi_C(g)\in\relint F),\quad k=0,\dots,n,
\end{align*}
where $\Pi_C$ denotes the orthogonal projection on $C$, that is, $\Pi_C(x)$ is the vector in $C$ which minimizes the Euclidean distance to $x\in\R^n$.

The conic intrinsic volumes are the analogues of the usual intrinsic volumes in the setting of conical or spherical geometry. Equivalently, the conic intrinsic volumes can be defined using the spherical Steiner formula, as done in~\cite[Section 6.5]{SW08}.
For further properties of conic intrinsic volumes we refer to~\cite[Section~2.2]{Amelunxen2017} and~\cite[Section~6.5]{SW08}.

Following Grünbaum~\cite{gruenbaum_grass_angles}, we define the \emph{Grassmann angles} $\gamma_k(C)$, $k\in\{0,\dots,n\}$, of a cone $C$ as follows. Let $W_{n-k}$ be random linear subspace of $\R^n$ with uniform distribution on the Grassmannian of all $(n-k)$-dimensional subspaces. Then, the $k$-th Grassmann angle of $C$ is defined as
\begin{align*}
\gamma_k(C):=\P(W_{n-k}\cap C\neq \{0\}),\quad k=0,\dots,n.
\end{align*}
If the lineality space $C\cap -C$ of a cone $C$, which is the maximal linear subspace contained in $C$, has dimension $j\in\{0,\dots,n-1\}$, the Grassmann angles satisfy
\begin{align*}
1=\gamma_0(C)=\ldots=\gamma_j(C)\ge\gamma_{j+1}(C)\ge\ldots\ge \gamma_n(C)=0.
\end{align*}
As proved in~\cite[Eq.~(2.5)]{gruenbaum_grass_angles}, the Grassmann angles do not depend on the dimension of the ambient linear subspace. This means that if we embed $C$ in $\R^N$ with $N\ge n$, we obtain the same Grassmann angles. Therefore, it is convenient to write $\gamma_N(C):=0$ for all $N\ge \dim C$. If $C$ is not a linear subspace, then $\frac{1}{2}\gamma_k(C)$ is also known as the \emph{$k$-th conic quermassintegral}; see~\cite[Eqs.~(1)-(4)]{SW08} or~\cite{HugSchneider2016}.

The conic intrinsic volumes and Grassmann angles satisfy a linear relation, called the \emph{conic Crofton formula}.
More precisely, we have
\begin{align}\label{eq:conic_crofton}
\gamma_k(C)=2\sum_{i=1,3,5,\dots}\upsilon_{k+i}(C)
\end{align}
for all $k\in\{0,\dots,n\}$ and for every cone $C\subset\R^n$ which is not a linear subspace, according to~\cite[p.261]{SW08}. Consequently,
\begin{align}\label{eq:conic_crofton_corollary}
\upsilon_k(C)=\frac{1}{2}\gamma_{k-1}(C)-\frac 12\gamma_{k+1}(C),
\end{align}
for all $k\in\{0,\dots,n\}$, where in the cases $k=0$ and $k=n$ we have to define $\gamma_{-1}(C)=1$ and $\gamma_{n+1}(C) = 0$. Then,~\eqref{eq:conic_crofton_corollary} follows from~\eqref{eq:conic_crofton} and the identity $\upsilon_0(C)+\upsilon_2(C)+\ldots=1/2$; see~\cite[Eq.~(5.3)]{amelunxen_edge}.



\section{Main results on belt polytopes}\label{section:main_results}


\subsection{Belt polytopes and hyperplane arrangements}
Given a polytope $P\subset \R^n$ and a linear map $G:\R^n\to \R^d$ we are interested in determining the face numbers of the ``projected'' polytope $GP$. It will be shown below that if $P$ is a belt polytope, then the face numbers of $GP$ are independent of the linear map $G$ provided it satisfies some minor general position assumption. Let us first define the class of belt polytopes and some related notions which will be needed below.

A (linear) \emph{hyperplane arrangement} is a finite collection of distinct linear hyperplanes in $\R^n$. The \emph{lattice} $\cL(\cA)$ generated by $\cA$ consists of all linear subspaces of $\R^n$ that can be represented as intersections of hyperplanes from $\cA$ including $\R^n$ (which is an intersection over the empty set).
Denote  by $\cL_j(\cA)$ the set of $j$-dimensional linear subspaces from the lattice $\cL(\cA)$, for $j\in \{0,\ldots, n\}$.  By convention, we put $\cL_{n}(\cA):=\{\R^n\}$.

The hyperplanes from $\cA$ dissect $\R^n$ into finitely many cones or chambers. More precisely, the complement $\R^n\backslash\bigcup_{H\in\cA}H$ is a disjoint union of open convex sets. The set of closures of these ``regions'' is denoted by $\cR(\cA)$ and called the \emph{conical mosaic} generated by $\cA$. The elements of $\cR(\cA)$ are called \emph{chambers}.

In general, a \emph{fan} in $\R^n$ is defined as a family $\cF$ of non-empty cones with the following two properties:
\begin{itemize}
\item[(i)] Every non-empty face of a cone in $\cF$ is also a cone in $\cF$;
\item[(ii)] The intersection of any two cones in $\cF$ is a face of both;
\end{itemize}
see~\cite[Chapter~7]{Ziegler_LecturesOnPolytopes}.
For example, the \emph{fan generated by a hyperplane arrangement} $\cA$ is defined as the collection of all faces of its chambers, that is
\begin{align*}
\cF(\cA)=\bigcup_{C\in\cR(\cA)} \cF(C).
\end{align*}
We denote the set of $j$-dimensional cones from $\cF(\cA)$ by $\cF_j(\cA)$. On the other hand,  the \emph{normal fan} of a non-empty polytope $P\subset\R^n$ is defined as the set of normal cones of all faces of $P$, that is
\begin{align*}
\cN(P):=\{N_F(P):F\in \cF(P)\}.
\end{align*}

A polytope $P\subset \R^n$ is called a \emph{belt polytope} if its normal fan $\cN(P)$ coincides with the fan $\cF(\cA)$ generated by some hyperplane arrangement $\cA$. Originally, belt polytopes were defined as polytopes such that every $2$-face contains together with each edge another parallel edge; see~\cite[page~226]{Ziegler_LecturesOnPolytopes}, \cite{backman} for the equivalence of the definitions. Examples of belt polytopes are zonotopes (see~\cite[Theorem~7.16]{Ziegler_LecturesOnPolytopes}) and permutohedra of types $A$ and $B$ (see Section~\ref{section:permutohedra}).


\subsection{General position assumptions}\label{section:gen_pos}
To state our results on face numbers of projected belt polytopes, we need to introduce the terminology of \emph{general position} in the context of hyperplane arrangements and polyhedral sets.
Let $M$ be an affine subspace of $\R^n$. Denote by $L\subset\R^n$ the unique linear subspace such that
$M=t+L$
for some $t\in\R^n$, that is the translation of $M$ passing through the origin.
We say that $M$ is in \emph{general position with respect to a linear subspace $L'\subset\R^n$} if
\begin{align}\label{eq:def_gen_pos_to_arrangement}
\dim(L\cap L')=\max(\dim L+\dim L'-n,0).
\end{align}
A linear subspace $L'$ is said to be in \emph{general position with respect to a polyhedral set $P$} if the affine hull of each face  $F$ of $P$ is in general position with respect to $L'$.
On the other hand, a linear subspace $L'\subset\R^n$ is said to be in \emph{general position with respect to the hyperplane arrangement $\cA$}  if $L'$ is in general position with respect to each $L\in\cL(\cA)$.


Now, we are ready state two equivalent general position assumptions that we need to impose on a linear mapping $G: \R^{n} \to\R^d$ under consideration (actually, both assumptions deal with the kernel of $G$ only). 

\begin{theorem}\label{theorem:gen_pos_general}
Let $P\subset\R^n$ be a belt polytope and denote its normal fan by $\cN(P)$. Let $\cA$ be the linear hyperplane arrangement whose fan $\cF(\cA)$ coincides with $\cN(P)$. For $1\le d\le \dim P$ and a linear map $G: \R^{n} \to \R^d$ with $\rank G=d$ the following two general position assumptions are equivalent:
\begin{enumerate}[label=(G\arabic*), leftmargin=50pt]
\item The $(n-d)$-dimensional linear subspace $\ker G$ is in general position with respect to $P$. \label{label_GPG1}
\item The $d$-dimensional linear subspace $(\ker G)^\perp$ is in general position with respect to the hyperplane arrangement $\cA$.\label{label_GPG2}
\end{enumerate}
\end{theorem}

The proof of this theorem is postponed to Section~\ref{section:proof_general_position}.

\subsection{Face numbers of projected belt polytopes}
Before stating the result, we need to introduce the characteristic polynomial of a hyperplane arrangement.
The \emph{rank} of a linear hyperplane arrangement $\cA$ in $\R^n$ is defined by
\begin{align*}
\rank(\cA)=n-\dim\bigg(\bigcap_{H\in\cA}H\bigg),\quad\rank(\varnothing)=0.
\end{align*}
The \emph{characteristic polynomial} $\chi_\cA(t)$ of $\cA$ can be defined by the following Whitney formula:
\begin{align}\label{eq:def_char_polynomial_whitney}
\chi_\cA(t)=\sum_{\cC\subset \cA}(-1)^{\#\cC}t^{n-\rank(\cC)};
\end{align}
see, e.g., \cite[Lemma~2.3.8]{Orlik1992} or~\cite[Theorem~2.4]{Stanley2007},  as well as~\cite[Section~1.3]{Stanley2007} or~\cite[Section~3.11.2]{Stanley2015} for other definitions using the M\"obius function on the intersection poset of $\cA$. More generally, following Amelunxen and Lotz~\cite{Amelunxen2017}, the \emph{$m$-th level characteristic polynomial} of $\cA$ is defined by
\begin{equation}
\chi_{\cA,m}(t) := \sum_{M\in\cL_{m}(\cA)} \chi_{A|M}(t),
\end{equation}
where $m\in \{0,\ldots, n\}$ and $\chi_{A|M}(t)$ is the characteristic polynomial of the induced hyperplane arrangement $\cA|M:=\{H\cap M:H\in\cA,M\nsubseteq H\}$ in the ambient space $M$. We use the following notation for the coefficients of $\chi_{\cA,m}(t)$:
\begin{equation}\label{eq:char_poly_hyperplane_arr_m_level_coefficients}
\chi_{\cA,m}(t)=\sum_{k=0}^{m}(-1)^{m-k} a_{k,m} t^k.
\end{equation}
We define $a_{k,m} := 0$ for $k\notin\{0,\ldots, m\}$.

\begin{theorem}\label{theorem:face_numbers_general_polytope}
Let $P\subset\R^n$ be a belt polytope and denote its normal fan by $\cN(P)$. Let $\cA$ be the linear hyperplane arrangement whose fan $\cF(\cA)$ coincides with $\cN(P)$. Moreover, let $G:\R^{n} \to \R^d$ be a linear map with $\rank G=d$ such that the equivalent general position assumptions~\ref{label_GPG1} and~\ref{label_GPG2} are satisfied. Then, for $0\le j<d\le \dim P$, the number of $j$-faces of the projected polytope $GP$ is independent of the linear map $G$ and given by
\begin{equation}\label{eq:face_number_proj_belt_polytope}
f_j(GP)
=
2(a_{n-d+1, n-j}+a_{n-d+3, n-j}+\ldots).
\end{equation}
\end{theorem}

The series in~\eqref{eq:face_number_proj_belt_polytope} terminates after finitely many non-zero terms. The proof of Theorem~\ref{theorem:face_numbers_general_polytope} is postponed to Section~\ref{section:proof_face_numbers}. 
As a consequence of Theorem~\ref{theorem:face_numbers_general_polytope}, the belt polytopes are equiprojective as defined in~\cite{Shephard1967}.

The coefficients of the $m$-th level characteristic polynomial appear in the following lemma which will be used in the proof of Theorem~\ref{theorem:face_numbers_general_polytope}.
\begin{lemma}\label{lemma:number_faces_arr_intersected_by_lin_subspace}
Let $L_d\subset\R^n$ be a linear subspace of dimension $d\in \{0,\ldots, n\}$ that is in general position with respect to a hyperplane arrangement $\cA$. Then, for all $m\in \{0,\ldots, n\}$,  the number of $m$-faces of  the fan $\cF(\cA)$ that are non-trivially intersected by $L_d$ is given by
\begin{align}
\#\{F\in\cF_m(\cA): \relint F\cap L_d\neq\varnothing\}
&=
2(a_{n-d+1,m}+a_{n-d+3,m}+\ldots) \notag\\
&=
\#\{F\in\cF_m(\cA): F\cap L_d \neq\{0\}\}, \label{eq:lemma:faces_arr_intersected_by_lin_subspace}
\end{align}
where the $a_{k,m}$'s are defined by~\eqref{eq:char_poly_hyperplane_arr_m_level_coefficients}. If we drop the general position assumption, then both equalities should be replaced by $\leq$.
\end{lemma}

\subsection{Angle sums of belt polytopes}
In the next theorem we compute the generalized angle sums of belt polytopes.
Recall that the Grassmann angles $\gamma_d$ and the conic intrinsic volumes $\upsilon_d$ were defined in Section~\ref{section:grassmann}.
\begin{theorem}\label{theorem:angle_sums_belt_polytope}
Let $P\subset\R^n$ be a belt polytope with normal fan $\cN(P)$ and let $\cA$ be the linear hyperplane arrangement whose fan $\cF(\cA)$ coincides with $\cN(P)$. Then, for all $0\le j\leq d\le \dim P$ we have
\begin{align*}
\sum_{F\in\cF_j(P)}\upsilon_d(T_F(P))
&=
a_{n-d, n-j},\\
\sum_{F\in\cF_j(P)}\gamma_d(T_F(P))
&=
2(a_{n-d-1, n-j}+a_{n-d-3,n-j}+\ldots).
\end{align*}
\end{theorem}
The proof is postponed to Section~\ref{sec:proof_angle_sums_belt_polys}.  Let us mention that applying Theorem~\ref{theorem:angle_sums_belt_polytope} to a full-dimensional zonotope $P$ with $d=n$ we recover a formula stated in~\cite[Theorem~12]{KS11}, while the special case $d=j$ is covered by~\cite[Proposition~3.2]{backman}.

\section{Applications to permutohedra of types \texorpdfstring{$A$}{A} and \texorpdfstring{$B$}{B}}\label{section:permutohedra}

In this section, we apply the results of Section~\ref{section:main_results} to permutohedra of types $A$ and $B$. These polytopes have been studied starting with the work of Schoute~\cite{Schoute1911} in 1911; see~\cite{billera,Postnikov2009,Hohlweg2011} as well as~\cite[Example~0.10]{Ziegler_LecturesOnPolytopes}, \cite[Section~5.3]{yemelichev_book}, \cite[pp.~58--60, 254--258]{barvinok_book} and~\cite[Example~2.2.5]{Bjoerner1999}.

\subsection{Definitions of permutohedra}
Take some point $(x_1,\dots,x_n)\in\R^n$. A \emph{permutohedron of type $A$} is defined as the following polytope in $\R^n$:
\begin{align*}
\cP_n^A=\cP_n^A(x_1,\dots,x_n):=\conv\big\{(x_{\sigma(1)},\dots,x_{\sigma(n)}):\sigma\in\text{Sym}(n)\big\},
\end{align*}
where $\text{Sym}(n)$ is the group of all permutations of the set $\{1,\dots,n\}$. Note that $\cP_n^A$ is contained in the hyperplane $\{t\in\R^n:t_1+\ldots+t_n=x_1+\ldots+x_n\}$ and therefore has dimension at most $n-1$. Similarly, a \emph{permutohedron of type $B$} is defined as the following polytope in $\R^n$:
\begin{align*}
\cP_n^B=\cP_n^B(x_1,\dots,x_n):=\conv\big\{(\eps_1x_{\sigma(1)},\dots,\eps_nx_{\sigma(n)}):\eps=(\eps_1,\dots,\eps_n)\in\{\pm1 \}^n,\sigma\in\text{Sym}(n)\big\}.
\end{align*}
Note that $\cP_n^A(x_1,\dots,x_n)$ does not change under permutations of $x_1,\ldots,x_n$, whereas $\cP_n^B(x_1,\dots,x_n)$ stays invariant under signed permutations. Therefore, there is no restriction of generality if we  assume that $x_1\geq \ldots\geq x_n$ in the $A$-case and $x_1\geq \ldots \geq x_n\geq 0$ in the $B$-case.

The next lemma is due to Rado~\cite{Rado1952} (see also~\cite[Section~5.3]{yemelichev_book}, \cite[p.~257]{barvinok_book} and~\cite[Corollary B.3]{marshall_olkin_arnold_book}) and describes $\cP_n^A$ as a set of solutions to a finite system of affine inequalities.
\begin{lemma}\label{lemma:rado_A}
Assume that $x_1\ge\ldots\ge x_n$. Then, a point $(t_1,\dots,t_n)\in\R^n$ belongs to the permutohedron $\cP_n^A(x_1,\dots,x_n)$ of type $A$ if and only if
\begin{align*}
t_1+\dots+t_n=x_1+\ldots+x_n
\end{align*}
and, for every non-empty subset $M\subset \{1,\dots,n\}$, we have
\begin{align*}
\sum_{i\in M}t_i\le x_1+\ldots+x_{|M|}.
\end{align*}
\end{lemma}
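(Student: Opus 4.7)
The plan is to reformulate the system of inequalities as a classical majorization condition and then deduce both directions using standard results from majorization theory.

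First I would observe that, once the identity $\sum_{i} t_i = \sum_i x_i$ is imposed, the entire family of inequalities indexed by non-empty subsets $M\subset\{1,\dots,n\}$ is equivalent to the same inequalities indexed only by subsets of the form ``the $k$ indices with the largest values of $t_i$''. In other words, it reduces to
\begin{equation*}
t_{[1]}+\ldots+t_{[k]}\le x_1+\ldots+x_k,\qquad k=1,\dots,n-1,
\end{equation*}
where $t_{[1]}\ge t_{[2]}\ge\ldots\ge t_{[n]}$ is the decreasing rearrangement of $t$. This is precisely the classical statement that $(t_1,\dots,t_n)$ is majorized by the already-sorted vector $(x_1,\dots,x_n)$.

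For the necessity direction, let $t$ be written as a convex combination $t=\sum_{\sigma}\lambda_\sigma(x_{\sigma(1)},\dots,x_{\sigma(n)})$. Summing all coordinates yields $\sum t_i=\sum x_i$. For each fixed $M$ of cardinality $k$ and each $\sigma\in\mathrm{Sym}(n)$, one has $\sum_{i\in M}x_{\sigma(i)}\le x_1+\ldots+x_k$ because $x_1,\dots,x_k$ are the $k$ largest values in the list $x_1,\dots,x_n$; the inequality is preserved by taking the convex combination over $\sigma$.

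The main step is sufficiency. Here I would appeal to two classical theorems. By the Hardy--Littlewood--P\'olya theorem, majorization $t\prec x$ is equivalent to the existence of a doubly stochastic matrix $D$ with $t=Dx$. By Birkhoff's theorem, any such $D$ is a convex combination of permutation matrices, $D=\sum_{\sigma}\lambda_\sigma P_\sigma$ with $\lambda_\sigma\ge 0$ and $\sum_\sigma\lambda_\sigma=1$. Substituting gives
\begin{equation*}
t=\sum_{\sigma\in\mathrm{Sym}(n)}\lambda_\sigma\bigl(x_{\sigma(1)},\dots,x_{\sigma(n)}\bigr),
\end{equation*}
which places $t$ in $\cP_n^A(x_1,\dots,x_n)$. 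The main obstacle is therefore concentrated in the Hardy--Littlewood--P\'olya theorem itself; if one prefers a self-contained argument, an alternative is induction on $n$: given $t$ satisfying the inequalities, produce a single ``$T$-transform'' (a convex combination of $t$ with its transposition along a carefully chosen pair of indices) that moves $t$ towards a vector of the form $(x_1,t')$ with $t'\in\R^{n-1}$ still satisfying the analogous majorization constraints with respect to $(x_2,\dots,x_n)$, and conclude by induction that $t$ lies in the convex hull of permutations of $(x_1,\dots,x_n)$.
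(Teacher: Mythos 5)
Your proof is correct, but it follows a genuinely different path from the one the paper (following Rado) uses. The paper does not prove Lemma~3.1 in-text; it cites Rado and proves the analogous $B$-statement (Lemma~3.2) directly: assuming $(t_1,\dots,t_n)$ satisfies the inequalities but lies outside the polytope, one separates it by a hyperplane $\langle u,\cdot\rangle=a$, reorders $t$ and $u$ so both are non-increasing, uses the rearrangement inequality to show the linear functional is maximized at the sorted point, and then an Abel (summation-by-parts) identity converts the strict separation into a contradiction with the assumed subset inequalities. You instead reduce the subset inequalities (together with the equality constraint) to the majorization condition $t\prec x$, and then invoke the Hardy--Littlewood--P\'olya theorem ($t\prec x\Leftrightarrow t=Dx$ for $D$ doubly stochastic) followed by Birkhoff--von Neumann ($D$ is a convex combination of permutation matrices) to exhibit $t$ as a convex combination of permutations of $x$. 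Both arguments are sound. Your route is shorter and more modular, but it offloads the real work onto two classical theorems---in particular Hardy--Littlewood--P\'olya, which is essentially equivalent in depth to the statement being proved (your fallback via $T$-transforms is precisely the standard proof of that theorem). Rado's separating-hyperplane argument, reproduced in the paper for the $B$-case, is self-contained and uses only elementary convexity plus the rearrangement inequality, which is why the paper prefers it. One more minor point: the reduction to the decreasing rearrangement at the start is correct and worth stating explicitly as you do, since it is the bridge between the ``all non-empty $M$'' formulation in the lemma and the standard definition of majorization.
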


An analogous result for the permutohedron of type $B$, together with a proof and references to the original literature, can be found in~\cite[Corollary~C.5.a]{marshall_olkin_arnold_book}.

\begin{lemma}\label{lemma:rado_B}
Assume that $x_1\ge\ldots\ge x_n\ge 0$. Then, a point $(t_1,\dots,t_n)\in\R^n$ belongs to the permutohedron $\cP_n^B(x_1,\dots,x_n)$ of type $B$ if and only if for every non-empty subset $M\subset\{1,\dots,n\}$, we have
\begin{align}\label{eq:rado_B}
\sum_{i\in M}|t_i|\le x_1+\ldots+x_{|M|}.
\end{align}
\end{lemma}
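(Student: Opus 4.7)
The plan is to handle the two directions separately. For necessity, I would observe that each inequality in \eqref{eq:rado_B} defines a convex subset of $\R^n$, since $t\mapsto|t_i|$ is convex and sums of convex functions are convex. It therefore suffices to verify \eqref{eq:rado_B} at each vertex of $\cP_n^B(x_1,\ldots,x_n)$. For a signed permutation $v=(\eps_1 x_{\sigma(1)},\ldots,\eps_n x_{\sigma(n)})$ one has $\sum_{i\in M}|v_i|=\sum_{i\in M}x_{\sigma(i)}$, which is a sum of $|M|$ distinct elements of $\{x_1,\ldots,x_n\}$ and is hence bounded above by $x_1+\ldots+x_{|M|}$.

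For sufficiency, let $t\in\R^n$ satisfy \eqref{eq:rado_B}. Because both $\cP_n^B(x_1,\ldots,x_n)$ and the system \eqref{eq:rado_B} are invariant under signed permutations of the coordinates, after applying a suitable signed permutation I may assume $t_1\ge t_2\ge\ldots\ge t_n\ge 0$. Specialising \eqref{eq:rado_B} to $M=\{1,\ldots,k\}$ then reduces the assumption to the weak-majorization inequalities $t_1+\ldots+t_k\le x_1+\ldots+x_k$ for $k=1,\ldots,n$.

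The central step will be to produce an auxiliary vector $u=(u_1,\ldots,u_n)\in\R^n$ satisfying $u_1\ge\ldots\ge u_n\ge 0$, $u_i\ge t_i$ for every $i$, $\sum_i u_i=\sum_i x_i$, and $\sum_{i\le k}u_i\le\sum_{i\le k}x_i$ for $k=1,\ldots,n-1$. I would construct $u$ by a water-filling argument: starting from $u=t$, distribute the total slack $\sum_i x_i-\sum_i t_i$ over the coordinates from right to left, at each step placing as much slack as possible on the current coordinate subject to the monotonicity constraint $u_i\le u_{i-1}$ and the cumulative cap $u_1+\ldots+u_i\le x_1+\ldots+x_i$. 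Once $u$ is available, Lemma~\ref{lemma:rado_A} applied to $u$, combined with the fact that $u$ is non-increasing (so that $\sum_{i\in M}u_i\le\sum_{i\le|M|}u_i\le\sum_{i\le|M|}x_i$ for every $M$), yields $u\in\cP_n^A(x_1,\ldots,x_n)\subseteq\cP_n^B(x_1,\ldots,x_n)$.

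To finish, I would recover $t$ from $u$ by reducing the coordinates one at a time via the identity
$$(u_1,\ldots,u_{i-1},t_i,u_{i+1},\ldots,u_n)\;=\;\tfrac{u_i+t_i}{2u_i}\,u\;+\;\tfrac{u_i-t_i}{2u_i}\,(u_1,\ldots,-u_i,\ldots,u_n),$$
valid whenever $u_i>0$ and $|t_i|\le u_i$; the degenerate case $u_i=0$ forces $t_i=0$ and no step is needed. Both points on the right lie in $\cP_n^B(x_1,\ldots,x_n)$ by its invariance under coordinate sign flips, so each reduction preserves membership in $\cP_n^B(x_1,\ldots,x_n)$, and iterating over $i=1,\ldots,n$ yields $t\in\cP_n^B(x_1,\ldots,x_n)$. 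I expect the main obstacle to be a clean execution of the water-filling construction of $u$ — simultaneously verifying monotonicity, pointwise domination and the cumulative-majorization bounds — after which the rest of the argument is essentially routine.
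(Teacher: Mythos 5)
Your approach is correct in outline and genuinely different from the paper's. For the sufficiency direction the paper argues by contradiction with a separating hyperplane: the normal vector is sorted into nonnegative decreasing order by a signed permutation, and Abel (partial) summation turns the weak-majorization inequalities into a contradiction with strict separation. You instead reduce to the type-$A$ Rado Lemma~\ref{lemma:rado_A} by building a non-increasing, non-negative vector $u\ge t$ that is majorized by $x$, placing $u\in\cP_n^A\subset\cP_n^B$, and then exhibit $t$ explicitly as an iterated convex combination of $u$ with sign-flipped copies of itself. Both strategies are valid; the paper's is shorter and dual in spirit, yours is primal and produces an explicit certificate of membership. The necessity direction (convexity plus verification at the vertices) is the same in both.

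The genuine gap is the water-filling construction of $u$, and as you wrote it the argument can fail. A single right-to-left pass, capping $u_i$ by the \emph{current} value of $u_{i-1}$, can stall before the slack is exhausted: take $n=3$, $x=(2,2,2)$, $t=(2,0,0)$, which satisfies all of \eqref{eq:rado_B}. At $i=3$ the constraint $u_3\le u_2=t_2=0$ forces $u_3=0$; at $i=2$ you may raise $u_2$ to $2$; at $i=1$ nothing moves. The pass terminates with $u=(2,2,0)$ and $\sum_i u_i=4<6=\sum_i x_i$, so $u$ is not majorized by $x$ and Lemma~\ref{lemma:rado_A} does not apply. A construction that does work is $u_i:=\max(t_i,c)$, where $c\ge 0$ is chosen so that $\sum_i\max(t_i,c)=\sum_i x_i$ (such a $c$ exists since the left side is continuous, non-decreasing, equals $\sum_i t_i\le\sum_i x_i$ at $c=0$, and tends to $+\infty$). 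Then $u$ is non-increasing, non-negative, $u\ge t$, and $\sum_i u_i=\sum_i x_i$. For the cumulative bounds let $j$ be the least index with $t_j<c$ (if none, then $u=t$ and $\sum_i t_i = \sum_i x_i$, so there is nothing to prove). If $k<j$, then $u_i=t_i$ for $i\le k$ and the bound is your hypothesis. If $k\ge j$, the bound $\sum_{i\le k}u_i\le\sum_{i\le k}x_i$ is equivalent to $(n-k)c\ge\sum_{i>k}x_i$; if this failed, then since $x$ is non-increasing its tail average exceeds $c$, so $x_{k+1}>c$ and hence $x_i>c$ for all $j\le i\le k+1$, giving $\sum_{i\ge j}x_i>(n-j+1)c$, contradicting $\sum_{i\ge j}x_i\le(n-j+1)c$, which follows from $\sum_i x_i=\sum_{i<j}t_i+(n-j+1)c\le\sum_{i<j}x_i+(n-j+1)c$. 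With this replacement the remaining steps of your argument (inclusion $\cP_n^A\subset\cP_n^B$ and the sign-flip convex combinations that lower $u_i$ to $t_i$ one coordinate at a time) go through as stated.
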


\subsection{Normal fans of permutohedra}

The following theorems characterize the normal fans of the permutohedra of types $A$ and $B$ and show that both types of permutohedra are belt polytopes.


\begin{theorem}\label{theorem:normalfan_permuto_A}
For $x_1>\ldots>x_n$ the normal fan $\cN(\cP_n^A(x_1,\dots,x_n))$ of the permutohedron of type $A$ coincides with the fan $\cF(\cA(A_{n-1}))$ generated by the hyperplane arrangement $\cA(A_{n-1})$ consisting of the hyperplanes
\begin{align}\label{eq:def_refl_arr_A}
\{\beta\in\R^n:\beta_i=\beta_j\},\quad 1\le i<j\le n.
\end{align}
\end{theorem}

\begin{theorem}\label{theorem:normalfan_permuto_B}
For $x_1>\ldots>x_n>0$ the normal fan $\cN(\cP_n^B(x_1,\dots,x_n))$ of the permutohedron of type $B$ coincides with the fan $\cF(\cA(B_n))$ generated by the hyperplane arrangement $\cA(B_n)$  consisting  of the hyperplanes
\begin{align}\label{eq:def_refl_arr_B}
&\{\beta\in\R^n:\beta_i=\beta_j\},\quad 1\le i<j\le n,\notag\\
&\{\beta\in\R^n:\beta_i=-\beta_j\},\quad 1\le i<j\le n,\\
&\{\beta\in\R^n:\beta_i=0\},\quad 1\le i\le n.\notag
\end{align}
\end{theorem}
The arrangements $\cA(A_{n-1})$ and $\cA(B_n)$ are called \emph{reflection arrangements} of types $A_{n-1}$ and $B_n$ and  the cones they generate are called the Weyl chambers.
Both theorems seem to be known, see, e.g., \cite[Section~3.1]{Hohlweg2011}, but for the sake of completeness we will give their proofs in Section~\ref{subsec:proof_normal_fans_permutoh}. For example, the normal cones at the vertices of the permutohedra coincide with the Weyl chambers of types $A_{n-1}$ and $B_n$, which was used to compute their statistical dimension in~\cite[Proposition~3.5]{amelunxen_edge}.

\subsection{General position assumptions}
In the next two corollaries of Theorem~\ref{theorem:gen_pos_general} we state general position assumptions we need to impose on the linear map $G:\R^n\to\R^d$ when computing the face numbers of projected permutohedra $G\cP_n^A$ and $G\cP_n^B$.

\begin{corollary}\label{cor:gen_pos_A}
Let $1\leq d\le n-1$ and $x_1>\ldots>x_n$. For a linear map $G:\R^{n} \to \R^d$ with $\rank G=d$, the following two conditions are equivalent:
\begin{enumerate}[label=(A\arabic*), leftmargin=50pt]
\item The $(n-d)$-dimensional linear subspace $\ker G$ is in general position with respect to $\cP_n^A(x_1,\dots,x_n)$. \label{label_GPA1}
\item The $d$-dimensional linear subspace $(\ker G)^\perp$ is in general position with respect to the reflection arrangement $\mathcal{A}(A_{n-1})$ defined in~\eqref{eq:def_refl_arr_A}.\label{label_GPA2}
\end{enumerate}
\end{corollary}


\begin{corollary}\label{cor:gen_pos_B}
Let $1\le d\le n$ and $x_1>\ldots>x_n>0$. For a linear map $G:\R^{n} \to \R^d$ with $\rank G=d$, the following two conditions are equivalent:
\begin{enumerate}[label=(B\arabic*), leftmargin=50pt]
\item The $(n-d)$-dimensional linear subspace $\ker G$ is in general position with respect to $\cP_n^B(x_1,\dots,x_n)$. \label{label_GPB1}
\item The $d$-dimensional linear subspace $(\ker G)^\perp$ is in general position with respect to the reflection arrangement $\mathcal{A}(B_n)$ defined in~\eqref{eq:def_refl_arr_B}.\label{label_GPB2}
\end{enumerate}
\end{corollary}

Both corollaries follow immediately from Theorem~\ref{theorem:gen_pos_general} since the normal fans $\cN(\cP_n^A)$ and $\cN(\cP_n^B)$ coincide with $\cF(\cA(A_{n-1}))$ and $\cF(\cA(B_n))$, respectively. Note that if $x_1>\ldots>x_n$, then the permutohedron $\cP_n^A$ has dimension $n-1$ (since any point $t$ in the hyperplane  $t_1+ \ldots + t_n = x_1+\ldots + x_n$ which is sufficiently close to the point $(\bar x_n,\ldots, \bar x_n)$, where $\bar x_n = (x_1+\ldots +x_n)/n$, satisfies the inequalities from Lemma~\ref{lemma:rado_A}). Similarly, for $x_1>\ldots>x_n>0$, the permutohedron $\cP_n^B$ of type $B$ has dimension $n$.




\subsection{Face numbers of projected permutohedra}\label{section:face_numbers}
In this section, we state our results on the number of faces of projected permutohedra of types $A$ and $B$.
The formulas will be stated in terms of Stirling numbers defined as follows.



The (signless) \emph{Stirling number of the first kind} $\stirling{n}{k}$ is the number of permutations of the set $\{1,\dots,n\}$ having exactly $k$ cycles. Equivalently, these numbers can be defined as the coefficients of the polynomial
\begin{align}\label{eq:def_stirling1_polynomial}
t(t+1)\cdot\ldots \cdot (t+n-1)=\sum_{k=0}^n\stirling{n}{k}t^k
\end{align}
for $n\in\N_0$, with the convention that $\stirling{n}{k}=0$ for $n\in \N_0$, $k\notin\{0,\dots,n\}$ and $\stirling{0}{0} = 1$. The \emph{Stirling number of the second kind} $\stirlingsec{n}{k}$  is defined as the number of partitions of the set $\{1,\dots,n\}$ into $k$ non-empty subsets.

The \emph{$B$-analogues of the Stirling numbers}, denoted by $\stirlingb nk$ and $\stirlingsecb nk$, are defined by the formulas
\begin{align}\label{eq:def_stirling12b}
(t+1)(t+3)\cdot\ldots\cdot (t+2n-1)=\sum_{k=0}^n \stirlingb nk t^k,
\quad
\stirlingsecb nk=\sum_{m=k}^{n}2^{m-k}\binom{n}{m}\stirlingsec{m}{k}.
\end{align}
for $n\in\N$ and, by convention, $\stirlingb nk=\stirlingsecb nk = 0$ for $k\notin\{0,\dots,n\}$.
The triangular arrays $\stirlingb nk$ and $\stirlingsecb nk$ appear as entries A028338 and  A039755 in~\cite{sloane}; see also~\cite{suter}.

\begin{lemma}
For all $j\in \{0,\ldots, n\}$, the number of $j$-dimensional subspaces in the lattices $\cL(\cA(A_{n-1}))$ and $\cL(\cA(B_{n}))$ is given by $\stirlingsec{n}{j}$ and $\stirlingsecb{n}{j}$, respectively.
\end{lemma}
\begin{proof}
The linear subspaces from $\cL_j(\cA(A_{n-1}))$ are in one-to-one correspondence with the (unordered) partitions $\cB = \{B_1,\ldots, B_j\}$ of $\{1,\ldots, n\}$ into non-empty, disjoint subsets. The linear subspace corresponding to $\cB$ is given by
$$
\{x\in \R^n: x_{i_1} = x_{i_2} \text{ for all } i_1, i_2 \text{ contained in the same block } B_\ell\}.
$$
The number of such partitions is $\stirlingsec{n}{j}$. Similarly, the elements of $\cL_j(\cA(B_{n}))$ are in one-to-one correspondence with the equivalence classes of pairs $(\cB, \eta)$, where $\cB=\{B_1,\ldots, B_j\}$ is an (unordered) partition of some subset $\supp \cB:= B_1\cup \ldots \cup B_j$ of $\{1,\ldots, n\}$ into disjoint, non-empty subsets $B_1,\ldots, B_j$, and $\eta:\supp \cB\to \{\pm 1\}$ is a sign function. Moreover,  two pairs $(\cB, \eta')$ and $(\cB, \eta'')$ with the same $\cB$ are considered equivalent if $\eta'/\eta''$ stays constant on each block $B_\ell$. The $j$-dimensional linear subspace corresponding to $(\cB, \eta)$ is given by
$$
\{x\in \R^n: \eta(i_1) x_{i_1} = \eta(i_2) x_{i_2} \text{ for all } i_1, i_2 \text{ contained in the same } B_\ell;
\;
x_i=0 \text{ for all } i\notin \supp \cB\}.
$$
Denoting the cardinality of $\supp \cB$ by $m\in \{j, \ldots, n\}$ we see that the number of equivalence classes as above is given by $\stirlingsecb nj$ as defined in~\eqref{eq:def_stirling12b}. Note that $2^{m-j}$ is the number of equivalence classes of sign functions $\eta$.
\end{proof}

Let $m\in \{0,\ldots, n\}$. The $m$-th level characteristic polynomials of the reflection arrangements $\cA(A_{n-1})$ and $\cA(B_n)$ are known from the work of Amelunxen and Lotz~\cite[Lemma~6.5]{Amelunxen2017}:
\begin{align}
\chi_{\cA(A_{n-1}),m}(t) &= \stirlingsec{n}{m} t(t-1)\ldots (t-m+1) = \stirlingsec{n}{m} \sum_{k=0}^m (-1)^{m-k} \stirling{m}{k} t^k,\label{eq:char_poly_level_reflection_arr_A}\\
\chi_{\cA(B_{n}),m}(t) &= \stirlingsecb{n}{m}  (t-1)(t-3)\ldots (t-2m+1) = \stirlingsecb{n}{m} \sum_{k=0}^m (-1)^{m-k} \stirlingb{m}{k} t^k.
\label{eq:char_poly_level_reflection_arr_B}
\end{align}
Note that in~\cite{Amelunxen2017}, the cardinality of $\cL_j(\cA(B_n))$ has been calculated incorrectly  (the power of $2$ in~\eqref{eq:def_stirling12b} is missing there).
It follows from~\eqref{eq:char_poly_level_reflection_arr_A} and~\eqref{eq:char_poly_level_reflection_arr_B} that the coefficients $a_{k,m}$, as defined in~\eqref{eq:char_poly_hyperplane_arr_m_level_coefficients}, are given by the following formulas:
\begin{align}
&\text{for } \cA(A_{n-1}):  &  a_{k,m} &= \stirlingsec{n}{m} \stirling{m}{k},\qquad k\in \{0,\ldots, m\}, \label{eq:coeff_char_poly_level_reflection_arr_A}\\
&\text{for } \cA(B_{n}):    &  a_{k,m} &= \stirlingsecb{n}{m} \stirlingb{m}{k}, \qquad k\in \{0,\ldots, m\}.
\label{eq:coeff_char_poly_level_reflection_arr_B}
\end{align}
Applying Theorem~\ref{theorem:face_numbers_general_polytope} to permutohedra and using~\eqref{eq:coeff_char_poly_level_reflection_arr_A} and~\eqref{eq:coeff_char_poly_level_reflection_arr_B}, we obtain the following results on the face numbers of projected permutohedra.
\begin{theorem}\label{theorem:facenumbers_proj_permuto_A}
Let $x_1>\ldots>x_n$ be given. For a linear map $G:\R^{n} \to \R^d$ with $\rank G=d$ which satisfies one of the equivalent general position assumptions~\ref{label_GPA1} or~\ref{label_GPA2}, we have
\begin{equation}\label{eq:face_num_proj_permutoh_type_A}
f_j(G\cP_n^A)=2\stirlingsec{n}{n-j}\left(\stirling{n-j}{n-d+1}+\stirling{n-j}{n-d+3}+\ldots\right),
\end{equation}
for all $0\le j<d\le n-1$.
\end{theorem}

\begin{theorem}\label{theorem:facenumbers_proj_permuto_B}
Let $x_1>\ldots>x_n>0$ be given. For a linear map $G:\R^{n} \to \R^d$ with $\rank G=d$ which satisfies one of the equivalent general position assumptions~\ref{label_GPB1} or~\ref{label_GPB2}, we have
\begin{equation}\label{eq:face_num_proj_permutoh_type_B}
f_j(G\cP_n^B)=2\stirlingsecb{n}{n-j}\big(\stirlingb{n-j}{n-d+1}+\stirlingb{n-j}{n-d+3}+\ldots\big),
\end{equation}
for all $0\le j<d\le n$.
\end{theorem}



\subsection{Faces of reflection arrangements intersected by linear subspaces}
The next lemma is closely related to Theorems~\ref{theorem:facenumbers_proj_permuto_A} and~\ref{theorem:facenumbers_proj_permuto_B} (as we shall see from the proofs) and follows from Lemma~\ref{lemma:number_faces_arr_intersected_by_lin_subspace}  in view of~\eqref{eq:coeff_char_poly_level_reflection_arr_A} and~\eqref{eq:coeff_char_poly_level_reflection_arr_B}.
\begin{lemma}\label{lemma:Weyl_faces_intersected_by_L}
The number of $j$-faces of Weyl chambers  (where each face is counted exactly once) intersected non-trivially by a $d$-dimensional subspace $L_d$ in general position with respect to the reflection arrangement $\cA(A_{n-1})$, respectively, $\cA(B_n)$, is given by
\begin{align*}
&\text{for } \cA(A_{n-1}): \sum_{F\in\cF_j(\cA(A_{n-1}))}\ind_{\{F\cap L_d\neq\{0\}\}}=2\stirlingsec{n}{j}\bigg(\stirling{j}{n-d+1}+\stirling{j}{n-d+3}+\ldots\bigg),\\
&\text{for } \cA(B_{n}): \sum_{F\in\cF_j(\cA(B_n))}\ind_{\{F\cap L_d\neq\{0\}\}}=2\stirlingsecb nj\big(\stirlingb{j}{n-d+1}+\stirlingb{j}{n-d+3}+\ldots\big),
\end{align*}
for all $j,d\in \{1,\ldots,n\}$.
\end{lemma}
The special case $j=n$ of this lemma can be found in~\cite[Theorem~3.4]{KVZ15} or~\cite[Theorem 2.4]{KVZ2019_multi_analogue}. A related result in a setting where the faces are counted with certain non-trivial multiplicities can be found in~\cite[Theorems~2.1, 2.8]{KVZ2019_multi_analogue}.

\subsection{Angle sums of permutohedra}\label{section:angle_sums}
In the next two theorems we compute the generalized angle sums of permutohedra. Both results follow from Theorem~\ref{theorem:angle_sums_belt_polytope} in view of~\eqref{eq:coeff_char_poly_level_reflection_arr_A} and~\eqref{eq:coeff_char_poly_level_reflection_arr_B}.


\begin{theorem}\label{theorem:angle_sums_permutoh_A}
Let $x_1>\ldots>x_n$ be given. Then, for all $0\le j\leq  d\le n-1$,
\begin{align}
\sum_{F\in\cF_j(\cP_n^A)}\upsilon_d(T_F(\cP_n^A))
&=
\stirlingsec{n}{n-j}\stirling{n-j}{n-d}, \label{eq:intr_vol_sums_permutoh_A}\\
\sum_{F\in\cF_j(\cP_n^A)}\gamma_d(T_F(\cP_n^A))
&=
2\stirlingsec{n}{n-j}\sum_{l=0}^\infty\stirling{n-j}{n-d-2l-1}. \label{eq:grassmann_sums_permutoh_A}
\end{align}
\end{theorem}

\begin{theorem}\label{theorem:angle_sums_permutoh_B}
Let $x_1>\ldots>x_n>0$ be given. Then, for all $0\le j\leq  d\le n$,
\begin{align}
\sum_{F\in\cF_j(\cP_n^B)}\upsilon_d(T_F(\cP_n^B))
&=
\stirlingsecb{n}{n-j}\stirlingb{n-j}{n-d},\label{eq:intr_vol_sums_permutoh_B}\\
\sum_{F\in\cF_j(\cP_n^B)}\gamma_d(T_F(\cP_n^B))
&=
2\stirlingsecb{n}{n-j}\sum_{l=0}^\infty \stirlingb{n-j}{n-d-2l-1}.\label{eq:grassmann_sums_permutoh_B}
\end{align}
\end{theorem}

\subsection{Permutohedra and zonotopes}
Besides permutohedra, the zonotopes are also a special case of the class of belt polytopes. A \emph{zonotope} $Z=Z(V)\subset\R^n$ is a Minkowski sum of a finite number of line segments, and therefore, can be written as
\begin{align*}
Z(V)=[-v_1,v_1]+\ldots+[-v_p,v_p]+z
\end{align*}
for some $p\in \N$, a matrix $V=(v_1,\dots,v_p)\in\R^{n\times p}$ and $z\in\R^n$. By~\cite[Definition~7.13]{Ziegler_LecturesOnPolytopes}, a zonotope $Z=Z(V)$ can equivalently be defined as the image of a cube under an affine map, that is,
\begin{align*}
Z(V):=V[-1,+1]^p+z=\{Vy+z:y\in[-1,+1]^p\}.
\end{align*}
In the book of Ziegler~\cite[Theorem~7.16]{Ziegler_LecturesOnPolytopes} it is proved that for a zonotope $Z=Z(V)\subset\R^n$, the normal fan $\cN(Z)$ of $Z$ coincides with the fan $\cF(\cA)$ of the hyperplane arrangement
\begin{align*}
\cA=\cA_V:=\{H_1,\dots,H_p\}
\end{align*}
in $\R^n$, where $H_i:=\{x\in\R^n:\langle x,v_i\rangle = 0\}$ for $i=1,\dots,p$. It is known~\cite[Example~7.15]{Ziegler_LecturesOnPolytopes}
that $\cP_n^{A}(n,n-1,\ldots,2, 1)$ is a zonotope and the natural question arises if permutohedra of types $A$ and $B$ are zonotopes for all $(x_1,\dots,x_n)\in\R^n$. The following theorem shows that this is not the case. Its proof is postponed to Section~\ref{subsec:proofs_permotohedra_as_zonotopes}.

\begin{theorem}\label{theo:permuto_vs_zonotopes}
For $x_1>\ldots>x_n$, the permutohedron $\cP_n^A(x_1,\dots,x_n)$ of type $A$ is a zonotope if and only if $x_1,\dots,x_n$ are in arithmetic progression, that is,
\begin{align}\label{eq:arithm_progr}
x_1=a+(n-1)b,\,x_2=a+(n-2)b,\dots,x_{n-1}=a+b,\,x_n=a
\end{align}
for some $a\in\R$ and $b>0$.

For $x_1>\ldots>x_n>0$, the permutohedron $\cP_n^B(x_1,\dots,x_n)$ of type $B$ is a zonotope if and only if $x_1,\dots,x_n$ are in arithmetic progression, that is if~\eqref{eq:arithm_progr} holds
for some $a>0$ and $b>0$.
\end{theorem}

\section{Proofs of the results on belt polytopes}\label{sec:proofs:belt_poly}
\subsection{General position: Proof of Theorem~\ref{theorem:gen_pos_general}}\label{section:proof_general_position}
Let $F\in\cF_k(P)$ be a $k$-face of $P$ for some $k\in\{0,\dots,\dim P\}$ and let $L$ be the linear subspace parallel to $\aff F$ with the same dimension as $\aff F$, that is, $\aff F=t+L$ for some $t\in\R^n$. Then, the normal cone $N_F(P)$ is $(n-k)$-dimensional and coincides with some $(n-k)$-dimensional cone from the fan of $\cA$, that is, an $(n-k)$-face of the conical mosaic generated by $\cA$.
Thus, $\lin N_F(P)$ can be represented as an intersection of hyperplanes from $\cA$ and therefore is an element of the lattice $\cL(\cA)$.

On the other hand, by definition, $T_F(P)$ contains the linear subspace $L$. Thus, we have $(\aff F)^\perp=L^\perp\supset T_F(P)^\circ=N_F(P)$. Since both $N_F(P)$ and $(\aff F)^\perp$ are $(n-k)$-dimensional, we obtain
\begin{align*}
L^\perp=(\aff F)^\perp=\lin N_F(P)\in\cL_{n-k}(\cA).
\end{align*}
The same argumentation applied backwards shows that, conversely,  each $(n-k)$-dimensional subspace $K\in\cL(\cA)$ coincides with  $\lin N_F(P)$ for some $k$-face $F$ of $P$. If we write $\aff F=t+L$ for some $t\in\R^n$, as above, then  we obtain $K=(\aff F)^\perp=L^\perp$.

The equivalence of (G1) and (G2) follows easily from these observations. Condition~\ref{label_GPG1} is not satisfied if and only if
\begin{align*}
\dim(L\cap\ker G)\neq\max\{k-d,0\}
\end{align*}
for some $k\in\{0,\dots,\dim P\}$ and some $k$-dimensional linear subspace $L$ such that $\aff F=t+L$ for some $F \in \cF_k(P)$ and $t\in\R^n$. By the above observation, $L^\perp\in\cL_{n-k}(\cA)$ and
\begin{align*}
\dim\big((\ker G)^\perp\cap L^\perp\big)
&	=n-\dim(L+\ker G )\\
&	=n-\left(\dim(\ker G)+\dim L-\dim(L\cap\ker G)\right)\\
&	=d-k+\dim(L\cap \ker G)\\
&   \neq d-k +  \max\{k-d,0\}\\
&= \max\{0,d-k\}.
\end{align*}
Thus, $(\ker G)^\perp$ is not in general position with respect to $\cA$ and~\ref{label_GPG2} is not satisfied. Since every $K\in\cL(\cA)$ can be represented as $L^\bot$ as above, the same argument applies backwards.
\hfill$\Box$


\subsection{Proof of Lemma~\ref{lemma:number_faces_arr_intersected_by_lin_subspace}}
We need the following  formula for the number of \textit{full-dimensional} chambers of a hyperplane arrangement that are intersected by a linear subspace non-trivially. 
For its proof, we refer to~\cite[Theorem~3.1]{greene_zaslavsky} or~\cite[Theorem~3.3]{KVZ15} in combination with~\cite[Lemma~3.5]{KVZ15}.

\begin{lemma}\label{lemma:number_regions}
Let $L_d\subset\R^n$ be a linear subspace of dimension $d\in \{0,\ldots, n\}$ that is in general position with respect to a hyperplane arrangement $\cA$. Recall that  $\cR(\cA)$ is the set of closed chambers generated by $\cA$.  Then,
\begin{align*}
\#\{R\in\cR(\cA):\inte R\cap L_d\neq\varnothing\}
=
2(a_{n-d+1}+a_{n-d+3}+\ldots)
=
\#\{R\in\cR(\cA): R\cap L_d\neq\{0\}\}
,
\end{align*}
where the $a_k$'s are the coefficients of the characteristic polynomial $\chi_\cA(t)=\sum_{k=0}^n(-1)^{n-k}a_kt^k$. If we drop the general position assumption, then both equalities should be replaced by $\leq$.
\end{lemma}

\begin{proof}[Proof of Lemma~\ref{lemma:number_faces_arr_intersected_by_lin_subspace}]
Let $m\in \{n-d+1,\ldots, n\}$ (otherwise all terms in~\eqref{eq:lemma:faces_arr_intersected_by_lin_subspace} vanish).
Observe that each $m$-face $F$ from the fan $\cF(\cA)$ is contained in a unique $m$-dimensional linear subspace $M\in\cL(\cA)$ that can be represented as an intersection of hyperplanes from $\cA$. The $m$-dimensional cones $F\in\cF_{m}(\cA)$ with $F\subset M$ are the closures of the $m$-dimensional regions generated by the induced  arrangement $\cA|M=\{H\cap M:H\in\cA,M\nsubseteq H\}$ in $M$ and therefore we obtain
\begin{align*}
\#\{F\in\cF_m(\cA): F\cap L_d \neq\{0\}\}
&	=\sum_{D\in\cF_{m}(\cA)}\1_{\{L_d \cap \, D\neq\{0\}\}}\\
&	=\sum_{M\in\cL_{m}(\cA)}\:\sum_{D\in\cF_{m}(\cA):D\subset M}\ind_{\{(L_d\cap M)\cap\,  D\neq\{0\}\}}\\
&   =\sum_{M\in\cL_{m}(\cA)}\:\sum_{R\in\cR(\cA|M)}\ind_{\{(L_d\cap M)\cap\, R\neq\{0\}\}}.
\end{align*}
By our general position assumption, the subspace $L_d\cap M$ is of codimension $n-d$ in $M$ and additionally in general position with respect to $\cA|M$ in $M$. Let the coefficients of the characteristic polynomial of the arrangement $\cA|M$ in the ambient space $M$ be given by
\begin{align*}
\chi_{A|M}(t)=\sum_{k=0}^{m}(-1)^{m-k}a_k^Mt^k.
\end{align*}
Then, we can apply Lemma~\ref{lemma:number_regions} to the ambient linear subspace $M$ and arrive at
\begin{align}
\#\{F\in\cF_m(\cA): F\cap L_d \neq\{0\}\}
&=
2\sum_{M\in\cL_{m}(\cA)}(a_{n-d+1}^M + a_{n-d+3}^M + \ldots)\notag\\
&=
2(a_{n-d+1, m}+a_{n-d+3, m}+\ldots). \notag 
\end{align}
The evaluation of $\#\{F\in\cF_m(\cA): \relint F\cap L_d\neq\varnothing\}$ is similar. Without the general position assumption, the inequalities follow from~\cite[Lemma~3.5]{KVZ15}.
\end{proof}


\subsection{Face numbers of projected belt polytopes: Proof of Theorem~\ref{theorem:face_numbers_general_polytope}}\label{section:proof_face_numbers}
For the proof of the following Farkas' Lemma, we refer to~\cite[Lemma 2.4]{Amelunxen2017} and~\cite[Lemma~2.1]{HugSchneider2016}.
\begin{lemma}[Farkas]\label{lemma:Farkas}
Let $C\subset\R^n$ be a full-dimensional  cone and $L\subset\R^n$ a linear subspace. Then,
\begin{align*}
\inte(C)\cap L\neq\varnothing  \quad \Leftrightarrow \quad  C^\circ\cap L^\perp  = \{0\}.
\end{align*}
\end{lemma}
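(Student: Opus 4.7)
The plan is to prove the two directions of the equivalence separately, using only elementary convex-geometric tools: the definition of the polar cone, openness of the interior, and the hyperplane separation theorem for convex sets.

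For the direction $\inte(C)\cap L\neq\emptyset\Rightarrow C^\circ\cap L^\perp=\{0\}$, I would pick $x\in\inte(C)\cap L$ and suppose for contradiction that there exists $y\in C^\circ\cap L^\perp$ with $y\neq 0$. On one hand, $y\in L^\perp$ and $x\in L$ give $\langle x,y\rangle=0$. On the other hand, since $x$ lies in the \emph{interior} of $C$, there exists $\eps>0$ such that $x+\eps y\in C$; then $y\in C^\circ$ forces $\langle x+\eps y,y\rangle\le 0$, i.e.\ $\eps\|y\|^2\le 0$, contradicting $y\neq 0$.

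For the converse $C^\circ\cap L^\perp=\{0\}\Rightarrow \inte(C)\cap L\neq\emptyset$, I would argue by contrapositive: assume $\inte(C)\cap L=\emptyset$. Since $\inte(C)$ is a nonempty open convex set (recall $C$ is full-dimensional, so $\inte(C)\neq\emptyset$) and $L$ is a nonempty convex set disjoint from it, the hyperplane separation theorem provides a nonzero vector $y\in\R^n$ and a scalar $\alpha$ with $\langle y,z\rangle\le\alpha\le\langle y,u\rangle$ for all $z\in\inte(C)$ and all $u\in L$. Because $L$ is a linear subspace (in particular closed under scaling by $\pm t$ for any $t>0$), the inequality $\alpha\le\langle y,u\rangle$ on $L$ forces $\langle y,u\rangle=0$ for every $u\in L$, so $y\in L^\perp$ and $\alpha\le 0$. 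Similarly, since $C$ is a cone, $\langle y,z\rangle\le\alpha$ on $\inte(C)$ (and hence on $C=\overline{\inte(C)}$) combined with homogeneity forces $\langle y,z\rangle\le 0$ for all $z\in C$, so $y\in C^\circ$. This produces a nonzero element of $C^\circ\cap L^\perp$, completing the contrapositive.

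The only delicate point I anticipate is the standard verification that a linear functional bounded above on a cone is necessarily nonpositive on the cone, and bounded above on a subspace is necessarily zero on the subspace; both follow immediately from closure under positive scaling (respectively, scaling by $\pm t$). No deep machinery beyond the separation theorem is required, so the proof is essentially a one-page exercise; the main conceptual step is recognising that full-dimensionality of $C$ guarantees $\inte(C)\neq\emptyset$ so that separation applies.
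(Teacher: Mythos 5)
Your proof is correct. Both directions are argued cleanly: in the forward direction, the combination $\langle x,y\rangle=0$ (from $x\in L$, $y\in L^\perp$) together with $\langle x+\eps y,y\rangle\le 0$ (from openness of $\inte C$ at $x$ and $y\in C^\circ$) yields the contradiction $\eps\|y\|^2\le 0$; in the converse, separating the open convex set $\inte C$ from the convex set $L$ and then exploiting homogeneity of both sets (scaling by $\pm t$ on the subspace, by $t>0$ on the cone) correctly produces a nonzero $y\in C^\circ\cap L^\perp$. The step $C=\overline{\inte C}$ is justified because in this paper a cone is by convention polyhedral, hence closed, and full-dimensionality gives $\inte C\neq\emptyset$.

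One point worth noting when comparing to the paper: the paper does not supply a proof of this lemma at all. It merely records the statement and refers the reader to Amelunxen--Lotz \cite[Lemma~2.4]{Amelunxen2017} and Hug--Schneider \cite[Lemma~2.1]{HugSchneider2016} for the argument. So there is no in-paper proof to compare against; your separation-theorem argument is a self-contained elementary proof that would serve perfectly well in place of the citation. (The cited sources state the result in the equivalent polar form $L\cap\inte C=\emptyset\Leftrightarrow L^\perp\cap C^\circ\neq\{0\}$ and prove it by the same kind of separation/polar-duality reasoning, so your route is, in spirit, the standard one.)
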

\begin{proof}[Proof of Theorem~\ref{theorem:face_numbers_general_polytope}]
Consider first the case when $P$ is full-dimensional.
Let $F$ be a $j$-face of $P$ and $0\le j<d\le n$ be given. Then, by~\cite{AS92} or~\cite[Proposition~5.3]{GKZ20_Random_Polytopes}, $GF$ is a $j$-face of $GP$ if and only if
\begin{align*}
\inte T_F(P)\cap\ker G= \varnothing
\end{align*}
since the general position assumption~\ref{label_GPG1} is satisfied. By Farkas' Lemma~\ref{lemma:Farkas}, this is equivalent to
\begin{align*}
(\ker G)^\perp\cap N_F(P)\neq \{0\}.
\end{align*}
Thus, using that $\cN(P)=\cF(\cA)$ and, in particular, $\{N_F(P):F\in\cF_j(P)\}=\cF_{n-j}(\cA)$, we obtain
\begin{align*}
f_j(GP)
	=\sum_{F\in\cF_j(P)}\ind_{\{GF\in\cF_j(GP)\}} 
	=\sum_{F\in\cF_j(P)}\ind_{\{(\ker G)^\perp\cap\, N_F(P)\neq\{0\}\}} 
	=\sum_{D\in\cF_{n-j}(\cA)}\1_{\{(\ker G)^\perp\cap \, D\neq\{0\}\}}.
\end{align*}
By~\ref{label_GPG2}, $(\ker G)^\perp$ is in general position w.r.t.\ $\cA$. By Lemma~\ref{lemma:number_faces_arr_intersected_by_lin_subspace} and its proof,
\begin{align}\label{eq:fulldimensional_case}
f_j(GP)
=
2\sum_{M\in\cL_{n-j}(\cA)}(a_{n-d+1}^M+a_{n-d+3}^M+\ldots)
=
2(a_{n-d+1, n-j}+a_{n-d+3, n-j}+\ldots),
\end{align}
which completes the proof in the full-dimensional case.

Now, suppose $p:=\dim P<n$. We want to restrict all arguments to the $p$-dimensional linear subspace $L$ satisfying $\aff P=t+L$ for some $t\in\R^n$, and then apply the already known full-dimensional case in the ambient space $L$. At first, we observe that $\rank (G|_L)=d$, since $\dim\ker (G|_L)=\dim(L\cap \ker G)=p-d\geq 0$ because $\ker G$ is in general position with respect to $P$ due to general position assumption~\ref{label_GPG1}. Furthermore, we need to verify whether the conditions~\ref{label_GPG1} and~\ref{label_GPG2} also hold in the restricted case where $n$ is replaced by $p$, $G$ is replaced by the restriction $G|_L$ of $G$ to $L$,  and $\cA$ is replaced by $\cA|L=\{H\cap L:H\in \cA,L\nsubseteq H\}=\{H\cap L:H\in \cA\}$. The last equation is due to $L^\perp\subset \lin N_F(P)$ for all faces $F$ of $P$, and therefore, $L^\perp\subset H$ for all $H\in\cA$, since the linear hull $\lin N_F(P)$ coincides with an intersection of hyperplanes from $\cA$. Thus, we also observe that the elements of $\cA|L$ and $\cA$ are in one-to-one correspondence via the mapping $H'\mapsto H'+L^\perp$ and, the inverse map is given by $H\cap L\mapsfrom H$.

Also, by~\ref{label_GPG1} for $P$ in $\R^n$, $\ker (G|_L)$ is in general position with respect to $K$, for each linear subspace $K$ such that $\aff F=t+K$ for some face $F$ of $P$, since
\begin{align*}
\dim (K\cap \ker (G|_L))=\dim(K\cap L\cap \ker G)=\dim(K\cap \ker G).
\end{align*}
Thus, \ref{label_GPG1} is also satisfied if we restrict all objects to $L$. Then,~\ref{label_GPG2} is also satisfied in the restricted version due to the equivalence of~\ref{label_GPG1} and~\ref{label_GPG2} proved in Theorem~\ref{theorem:gen_pos_general}. Thus, we can apply~\eqref{eq:fulldimensional_case} in the restricted case to obtain
\begin{align*}
f_j(G P)=2\sum_{M'\in\cL_{p-j}(\cA|L)}\big(a^{M'}_{p-d+1}+a^{M'}_{p-d+3}+\ldots\big),
\end{align*}
since $(\cA|L)|M'=\cA|M'$ and therefore $\chi_{(\cA|L)|M'}(t)=\chi_{\cA|M'}(t)$. Next we observe that the linear subspaces $M'\in\cL_{p-j}(\cA|L)$ are in one-to-one correspondence to the linear subspaces $M\in \cL_{n-j}(\cA)$ via $M'\mapsto M'+L^\perp =: M$. By the Whitney formula for the characteristic polynomial~\eqref{eq:def_char_polynomial_whitney} and the identity $\cA|(M'+L^\perp)=(\cA|M')+L^\perp$, we obtain the relation
\begin{align*}
\chi_{\cA|(M'+L^\perp)}(t) = \chi_{(\cA|M')+L^\perp}(t) = t^{n-p}\chi_{\cA|M'}(t),
\end{align*}
for all $M'\in\cL_{p-j}(\cA|L)$, and thus, $a^{M'}_{k}=a^{M'+L^\bot}_{k+n-p}$. Hence, we arrive at
\begin{align*}
f_j(G P)
	=2\sum_{M'\in\cL_{p-j}(\cA|L)}\big(a^{M'+L^\perp}_{n-d+1}+a^{M'+L^\perp}_{n-d+3}+\ldots\big)
	=2\sum_{M\in\cL_{n-j}(\cA)}\big(a^M_{n-d+1}+a^M_{n-d+3}+\ldots\big),
\end{align*}
which completes the proof.
\end{proof}

\subsection{Angle sums of belt polytopes: Proof of Theorem~\ref{theorem:angle_sums_belt_polytope}} \label{sec:proof_angle_sums_belt_polys}
We shall use the following generalized version of the Klivans-Swartz formula~\cite{KS11} due to Amelunxen and Lotz~\cite[Theorem~6.1]{Amelunxen2017}.
Consider a linear hyperplane arrangement $\cA$ in $\R^n$ whose $m$-th level characteristic polynomial is written in the form~\eqref{eq:char_poly_hyperplane_arr_m_level_coefficients}. Then, for all $0\leq k \leq m \leq n$ we have
\begin{equation}\label{eq:klivans_swartz_generalized}
\sum_{C\in \cF_{m}(\cA)} \upsilon_{k} (C) = a_{k,m}.
\end{equation}
To prove the first identity of Theorem~\ref{sec:proof_angle_sums_belt_polys}, we use the fact that $T_F(P)$ is the dual of $N_F(P)$ (which has dimension $n-j$), the definition of belt polytopes and then~\eqref{eq:klivans_swartz_generalized}:
\begin{equation}\label{eq:proof_sum_angle_belt_polys_upsilon}
\sum_{F\in \cF_j(P)} \upsilon_d(T_F(P))
=
\sum_{F\in \cF_j(P)}\upsilon_{n-d} (N_F(P))
=
\sum_{C\in \cF_{n-j}(\cA)} \upsilon_{n-d} (C)
=
a_{n-d,n-j}.
\end{equation}
To prove the second identity of Theorem~\ref{sec:proof_angle_sums_belt_polys}, we first suppose that $j<\dim P$. Then, for every $F\in \cF_j(P)$, the tangent cone $T_F(P)$ is not a linear subspace, and we can apply the conic Crofton formula~\eqref{eq:conic_crofton} together with~\eqref{eq:proof_sum_angle_belt_polys_upsilon}:
\begin{align*}
\sum_{F\in \cF_j(P)} \gamma_d(T_F(P))
&=
2 \sum_{F\in \cF_j(P)} \left( \upsilon_{d+1}(T_F(P)) + \upsilon_{d+3}(T_F(P)) + \ldots \right)\\
&=
2 (a_{n-d-1,n-j} + a_{n-d-3,n-j} + \ldots).
\end{align*}
In the remaining case $j=d= \dim P$, both sides of the above formula vanish. Indeed, the only $j$-dimensional face $F$ is $P$ itself,  $T_F(P)$ is a $d$-dimensional linear space and $\gamma_d(T_F(P)) = 0$. On the other hand, $\dim P = d$ implies that every linear subspace from $\cL(\cA)$ contains the $(n-d)$-dimensional linear subspace $M = (\aff P)^\perp$. The restriction of $\cL(\cA)$ to $M$ is empty, implying that all $a_{k, n-j}$ vanish.

\section{Proofs of the results on permutohedra}\label{sec:proofs_permutoh}


\subsection{Faces of permutohedra}\label{section:proof_faces_permuto}
Before starting with the proofs, let us mention the following well-known fact.
\begin{lemma}
The points  $(x_{\sigma(1)},\dots,x_{\sigma(n)})$ are indeed vertices of $\cP_n^A$ for all $\sigma\in\text{Sym}(n)$. Similarly, the points  $(\eps_1x_{\sigma(1)},\dots,\eps_nx_{\sigma(n)})$ are vertices of $\cP_n^B$ for all $\eps\in\{\pm 1\}^n$, $\sigma\in\text{Sym}(n)$.
\end{lemma} 
\begin{proof}
Let us explain this in the $B$-case. It suffices to prove the claim for the point $x = (x_1,\dots,x_n)$ of $\cP_n^B$, where $x_1\geq \ldots \geq x_n\geq 0$. Suppose that there are points $y=(y_1,\dots,y_n)\in\cP_n^B$ and $z=(z_1,\dots,z_n)\in\cP_n^B$ such that $x= (y+z)/2$.
By Lemma~\ref{lemma:rado_B}, we have $|y_1|\leq x_1$ and $|z_1|\leq x_1$. Thus, we have $y_1=z_1=x_1$. Given this, we can consider the second coordinate in the same way. Inductively, we obtain $y_i=z_i = x_i$ for all $i=1,\dots,n$, which means that $(x_1,\dots,x_n)$ is indeed a vertex of $\cP_n^B$.
\end{proof}



\subsubsection*{Faces of the permutohedra}
We now state an explicit characterization of the faces of both types of permutohedra.
Let $\cR_{n,j}$ be the set of all ordered partitions $\cB = (B_1,\dots,B_j)$ of the set $\{1,\dots,n\}$ into $j$ non-empty, disjoint and distinguishable subsets $B_1,\dots,B_j$. Furthermore, let $\cT_{n,j}$ be the set of all pairs $(\cB,\eta)$, where $\cB=(B_1,\dots,B_{j+1})$ is an ordered partition of the set $\{1,\dots,n\}$ into $j+1$ disjoint distinguishable subsets such that $B_1,\dots,B_j$ are non-empty, whereas $B_{j+1}$ may be empty or not, and $\eta:B_1\cup\ldots\cup B_j\to\{\pm 1\}$. In what follows, we write $\eta_i:=\eta(i)$ for ease of notation.

The elements of $\cR_{n,j}$ and $\cT_{n,j}$ are in bijective correspondence with the $j$-faces of the fans $\cF(\cA(A_{n-1}))$ and $\cF(\cA(B_{n}))$. The $j$-face in the fan of $\cA(A_{n-1})$ corresponding to a partition $\cB\in \cR_{n,j}$ is the set of all $x\in \R^n$ such that $x_{i_1} = x_{i_2}$ for all indices $i_1,i_2$ from the same $B_\ell$ and $x_{i_1} \leq x_{i_2}$ for all $i_1 \in B_{\ell_1}$ and $i_2 \in B_{\ell_2}$ with $1\leq \ell_1 < \ell_2 \leq j$. Furthermore, the $j$-face in the fan of $\cA(B_n)$ corresponding to the pair $(B,\eta)$ is the set of all $x\in \R^n$ such that $\eta_{i_1} x_{i_1} = \eta_{i_2} x_{i_2}$ for all indices $i_1,i_2$ from the same $B_\ell$ with $1\leq \ell \leq j$,   $\eta_{i_1} x_{i_1} \leq \eta_{i_2} x_{i_2}$ for all $i_1 \in B_{\ell_1}$ and $i_2 \in B_{\ell_2}$ with $1\leq \ell_1 < \ell_2 \leq j$, and $x_i=0$ for all $i\in B_{j+1}$.
It is easy to check that the cardinalities of $\cR_{n,j}$ and $\cT_{n,j}$ are given by $j!\stirlingsec{n}{j}$ and $\sum_{m=j}^n 2^m \binom{n}{m}j!\stirlingsec{m}{j} = 2^j j! \stirlingsecb{n}{j}$, respectively.

\begin{proposition}\label{prop:faces_permuto_A}
Suppose that $x_1>\ldots>x_n$.
Then, for $j\in\{0,\dots,n-1\}$, the $j$-dimensional faces of $\cP_n^A(x_1,\dots,x_n)$ are in one-to-one correspondence with the ordered partitions $\cB\in\cR_{n,n-j}$. The $j$-face corresponding to the ordered partition $\cB=(B_1,\dots,B_{n-j})\in\cR_{n,n-j}$ is given by
\begin{align*}
F_\cB
&=\conv\{(x_{\sigma(1)},\dots,x_{\sigma(n)}):\sigma\in I_\cB\}
\\
&=\bigg\{(t_1,\dots,t_n)\in\cP_n^A(x_1,\dots,x_n):\sum_{i\in B_1\cup\ldots\cup B_l}t_i=x_1+\ldots+x_{|B_1\cup\ldots\cup B_l|}\;\forall l=1,\dots,{n-j-1}\bigg\}.
\end{align*}
Here, $I_\cB\subset\text{Sym}(n)$ is the set of all permutations $\sigma\in\text{Sym}(n)$ such that
\begin{multline*}
\sigma(B_1) = \{1,\ldots,|B_1|\},
\;\;
\sigma(B_2) = \{|B_1|+1,\ldots,|B_1\cup B_2|\},
\;\;
\ldots,\\
\sigma(B_{n-j}) = \{|B_1\cup\ldots \cup B_{n-j-1}|+1,\ldots,n\}.
\end{multline*}
\end{proposition}

\begin{proposition}\label{prop:faces_permuto_B}
Suppose that $x_1>\ldots>x_n>0$.
Then, for $j\in\{0,\dots,n\}$, the $j$-dimensional faces of $\cP_n^B(x_1,\dots,x_n)$ are in one-to-one correspondence with the pairs $(\cB,\eta)\in\cT_{n,n-j}$.
The $j$-face corresponding to the pair $(\cB,\eta)$, where $\cB=(B_1,\dots,B_{n-j+1})$, is given by
\begin{align}
F_{\cB,\eta}
&	=\conv\{(\eps_1x_{\sigma(1)},\dots,\eps_nx_{\sigma(n)}):(\sigma,\eps)\in I_{\cB,\eta}\}\label{eq:faces_permutoB1}\\
&	=\bigg\{t\in\cP_n^B(x_1,\dots,x_n):\sum_{i\in B_1\cup\ldots\cup B_l}\eta_i t_i=x_1+\ldots+x_{|B_1\cup\ldots\cup B_l|}\;\forall l=1,\dots,{n-j}\bigg\}\label{eq:faces_permutoB2}.
\end{align}
Here, $I_{\cB,\eta}\subset\textup{Sym}(n)\times\{\pm 1\}^n$ is the set of all pairs $(\sigma,\eps)\in\textup{Sym}(n)\times\{\pm 1\}^n$ such that
\begin{multline*}
\sigma(B_1) = \{1,\ldots,|B_1|\},
\;\;
\sigma(B_2) = \{|B_1|+1,\ldots,|B_1\cup B_2|\},
\;\;
\ldots,\\
\sigma(B_{n-j+1}) = \{|B_1\cup\ldots \cup B_{n-j}|+1,\ldots,n\}
\end{multline*}
and $\eps_i=\eta_{i}$ for all $i\in B_1\cup\ldots\cup B_{n-j}$, while the remaining $\eps_i$'s take arbitrary values in $\{\pm 1\}$.
\end{proposition}

Proofs of Proposition~\ref{prop:faces_permuto_A} can be found in~\cite[pp.~254-256]{barvinok_book} or in~\cite[Section~5.3]{yemelichev_book}. Without proof, versions of the same proposition are stated in~\cite[Proposition 2.6]{Postnikov2009} and in Exercise 2.9 on p.~96 of~\cite{Bjoerner1999}.  For completeness, we will provide a proof of Proposition~\ref{prop:faces_permuto_B} (which may also be known).

\begin{proof}[Proof of Proposition~\ref{prop:faces_permuto_B}]
Let $F\in\cF(\cP_n^B)$ be a face of $\cP_n^B(x_1,\ldots,x_n)$ with $x_1>\ldots>x_n>0$. Either, we have $F=\cP_n^B$, which means there is nothing to prove, or there is a supporting hyperplane $H=\{t\in\R^n:\alpha_1t_1+\ldots+\alpha_nt_n=b\}$ for some $\alpha=(\alpha_1,\dots,\alpha_n)\in\R^n\backslash \{0\}$ and $b\in\R$ such that
\begin{align}\label{eq:supp_hyperplane_B}
H\cap \cP_n^B=F\quad\text{and}\quad \cP_n^B\subset H^-:=\{t\in\R^n:\alpha_1t_1+\ldots+\alpha_nt_n\le b\}.
\end{align}
Without loss of generality, we may assume that $\alpha_1\ge \ldots\ge \alpha_n\ge 0$ (otherwise apply a signed permutation of $\{1,\dots,n\}$ to $(\alpha_1,\ldots,\alpha_n)$ and all other objects). Then,
\begin{align}\label{eq:alphas_B}
\underbrace{\alpha_1=\dots=\alpha_{i_1}}_{\text{group $1$}}>\underbrace{\alpha_{i_1+1}=\ldots=\alpha_{i_2}}_{\text{group $2$}}>\ldots>\underbrace{\alpha_{i_{m-1}+1}=\ldots=\alpha_{i_{m}}}_{\text{group $m$}}>\underbrace{\alpha_{i_{m}+1}=\ldots=\alpha_n=0}_{\text{group $m+1$}},
\end{align}
for some $m\in\{1,\dots,n\}$ and $1\le i_1<\ldots<i_{m}\le n$.  Note that for $i_{m}=n$, no $\alpha_i$'s are required to be zero, which means that the last group is empty. Then, $\cP_n^B\subset H^-$ implies that
\begin{align*}
\alpha_1\eps_1x_{\sigma(1)}+\ldots+\alpha_n\eps_nx_{\sigma(n)}\le b, \quad \text{ for all } \eps=(\eps_1,\dots,\eps_n)\in\{\pm 1\}^n,\sigma\in\text{Sym}(n).
\end{align*}
The first equation of~\eqref{eq:supp_hyperplane_B} implies that there is a pair $(\sigma', \eps')\in \text{Sym}(n)\times \{\pm 1\}^n$ such that
\begin{align*}
\alpha_1\eps_1' x_{\sigma'(1)}+\ldots+\alpha_{i_{m}}\eps_{i_{m}}'x_{\sigma'(i_{m})}
=
\alpha_1\eps_1'x_{\sigma'(1)}+\ldots+\alpha_n\eps_n'x_{\sigma'(n)}
=
b.
\end{align*}
Since the $\alpha_i$'s and the $x_i$'s are non-increasing and non-negative, the swapping lemma (see, e.g., \cite[p.~254]{barvinok_book}) states that $\alpha_1\eps_1x_{\sigma(1)}+\ldots+\alpha_n\eps_nx_{\sigma(n)}$ attains its maximal value if  we choose  $\eps_i=+1$ and $\sigma(i) =i$  for all $i\in \{1,\ldots,n\}$. It follows that, in fact, we have
\begin{align}\label{eq:alpha_x_eq_b}
\alpha_1x_1+\ldots+\alpha_{n}x_{n}=b.
\end{align}
Denote the groups of indices appearing in~\eqref{eq:alphas_B} by
\begin{equation} \label{eq:B_1_ldots_B_Fall}
B_1=\{1,\dots,i_1\},\;\;
\dots,\;\;
B_{m}=\{i_{m-1}+1,\dots,i_{m}\},
\;\;
B_{m+1}=\{i_{m}+1,\dots,n\},
\end{equation}
where $B_{m+1}$ may or may not be empty. Defining in our case $\eta_i :=1$ for all $i\in \{1,\ldots,i_m\}$  we obtain that, under~\eqref{eq:B_1_ldots_B_Fall}, the set $I_{\cB,\eta}$ consists of all pairs $(\sigma,\eps)\in\text{Sym}(n)\times\{\pm 1\}^n$ such that
$$
\sigma(B_1)=B_1,\;\; \dots, \;\; \sigma(B_m)=B_m, \;\; \sigma(B_{m+1})=B_{m+1}
$$
and $\eps_i = \eta_i$ for all $i\in \{1,\ldots,i_m\}$. Consequently, from~\eqref{eq:alpha_x_eq_b} and~\eqref{eq:alphas_B} it follows that
\begin{align}\label{eq:alpha_x_eq_b_perm}
\alpha_1\eps_1x_{\sigma(1)}+\ldots+\alpha_{n}\eps_{n}x_{\sigma(n)}=b
\quad  \text{ for all } (\sigma,\eps)\in I_{\cB,\eta}.
\end{align}
Furthermore, it follows from \eqref{eq:alpha_x_eq_b}, \eqref{eq:alphas_B} and the swapping lemma that
\begin{align}\label{eq:alpha_x_less_b_strictly}
\alpha_1\eps_1x_{\sigma(1)}+\ldots+\alpha_{n}\eps_{n}x_{\sigma(n)}<b
\quad  \text{ for all }  (\sigma,\eps)\in(\text{Sym}(n)\times\{\pm 1\}^n)\backslash I_{\cB,\eta}.
\end{align}
Indeed, if $(\sigma,\eps)\notin I_{\cB,\eta}$, then there is the possibility that we have a strictly negative term on the left-hand side of~\eqref{eq:alpha_x_less_b_strictly} which means that we could make it strictly larger by changing the sign of this term. Thus, we can assume all these terms to be non-negative. Then, $(\sigma,\eps)\notin I_{\cB,\eta}$ implies that there is a pair of indices $1\le i<j\le n$ such that $\alpha_i>\alpha_j$ und $x_{\sigma(i)}<x_{\sigma(j)}$ and we can apply the swapping lemma to strictly increase the left-hand side.

According to~\eqref{eq:alpha_x_eq_b_perm} and~\eqref{eq:alpha_x_less_b_strictly}, the vertices $(\eps_1 x_{\sigma(1)},\dots,\eps_nx_{\sigma(n)})$ with $(\sigma,\eps)\in I_{\cB,\eta}$ are the only vertices of $\cP_n^B$ that belong to the supporting hyperplane $H$.  It follows from~\cite[Proposition~2.3]{Ziegler_LecturesOnPolytopes} that $F$ is the convex hull of these vertices, that is $F=F_{\cB,\eta}$, where
\begin{align}\label{eq:faces_permutoB3}
F_{\cB,\eta}:=\conv\big\{(\eps_1x_{\sigma(1)},\dots,\eps_n x_{\sigma(n)}):(\sigma,\eps)\in I_{\cB,\eta}\big\}.
\end{align}

Essentially the same argument shows that, conversely, a set of the form $F_{\cB,\eta}$ is a face of $\cP_n^B$.  At the beginning, we applied a signed permutation to all objects including $(\alpha_1,\dots,\alpha_n)$ to achieve that the $\alpha_i$'s are non-increasing and non-negative. Applying the inverse signed permutation proves that the faces of $\cP_n^B$ coincide with the sets of the form $F_{\cB,\eta}$ as defined in~\eqref{eq:faces_permutoB3}, for some pair $(\cB,\eta)\in\cT_{n,m}$.
Furthermore, for two different pairs $(\cB',\eta'),(\cB'',\eta'')$ we have $I_{\cB',\eta'}\neq I_{\cB'',\eta''}$, which implies that the corresponding  sets $F_{\cB',\eta'}$ and $F_{\cB'',\eta''}$ are different, since their sets of vertices are different.
Finally, the polytope $F_{\cB,\eta}$, for $(\cB,\eta)\in\cT_{n,m}$, is isometric to the direct product $\cP_{|B_1|}^A \times\ldots \times \cP_{|B_m|}^A\times \cP_{|B_{m+1}|}^B$, which follows from the description of the vertices of $F_{\cB,\eta}$. It follows that $\dim F_{\cB,\eta}=n-m$.

Now, we  prove the equivalence of the representations~\eqref{eq:faces_permutoB1} and~\eqref{eq:faces_permutoB2}.
To this end, we take some pair $(\cB,\eta)\in \cT_{n,m}$, assuming without restriction of generality that
\begin{align*}
B_1:=\{1,\dots,i_1\},\;\;
B_2:=\{i_1+1,\dots,i_2\},
\;\;
\dots,
\;\;
B_{m+1}:=\{i_{m}+1,\dots,n\},
\end{align*}
where $1\le i_1<\ldots<i_{m}\le n$ for some $m\in \{1,\dots,n\}$, and $\eta_i=1$ for $i\in \{1,\dots,i_m\}$. Our goal is to prove that $F_{\cB,\eta} = M$, where
\begin{align*}
M:= \bigg\{t\in\cP_n^B(x_1,\dots,x_n):\sum_{i\in B_1\cup\ldots\cup B_l}t_i=x_1+\ldots+x_{|B_1\cup\ldots\cup B_l|}\;\forall l=1,\dots,{m}\bigg\}.
\end{align*}
The inclusion  $F_{\cB,\eta}\subset M$ holds trivially and we only need to prove that $M\subset F_{\cB,\eta}$.

Let $\alpha=(\alpha_1,\ldots,\alpha_n)\in \R^n\backslash\{0\}$ be such that  condition~\eqref{eq:alphas_B} holds. The above arguments show that the hyperplane $H=\{t\in\R^n:\alpha_1t_1+\ldots+\alpha_nt_n=b\}$ with $b:=\alpha_1x_1+\ldots+\alpha_n x_n$ is a supporting hyperplane of the face $F_{\cB,\eta}$, that is
$$
H\cap \cP_n^B = F_{\cB,\eta}\quad\text{and}\quad \cP_n^B\subset H^-:=\{t\in\R^n:\alpha_1t_1+\ldots+\alpha_nt_n\le b\}.
$$
Suppose now that there is some $y\notin F_{\cB,\eta}$ such that $y\in M\subset\cP_n^B\subset H^-$. This already yields
\begin{align*}
\alpha_1y_1+\ldots+\alpha_{i_m}y_{i_m}=\alpha_1y_1+\ldots+\alpha_ny_n<t=\alpha_1x_1+\ldots+\alpha_{i_m}x_{i_m},
\end{align*}
since $y\in H^-$, but $y\notin H$. It follows that
\begin{align*}
&\alpha_{i_{m}}(y_1+\ldots+y_{i_m})+\sum_{l=1}^{m-1}(\alpha_{i_{l}}-\alpha_{i_{l+1}})\sum_{i=1}^{i_l}y_i\\
&	\quad=\alpha_1y_1+\ldots+\alpha_{i_m}y_{i_m}\\
&	\quad<\alpha_1x_{1}+\ldots+\alpha_{i_m}x_{i_m}\\
&	\quad=\alpha_{i_{m}}(x_{1}+\ldots+x_{i_m})+\sum_{l=1}^{m-1}(\alpha_{i_{l}}-\alpha_{i_{l+1}})\sum_{i=1}^{i_l}x_i,
\end{align*}
which is a contradiction to $y\in M$. This proves that both representations~\eqref{eq:faces_permutoB1} and~\eqref{eq:faces_permutoB2} are equivalent.
\end{proof}

\subsection{Normal fans of permutohedra: Proofs of Theorems~\ref{theorem:normalfan_permuto_A} and~\ref{theorem:normalfan_permuto_B}} \label{subsec:proof_normal_fans_permutoh}
First we need to prove a simple lemma concerning the interior of a polytope.

\begin{lemma}\label{lemma:relintP}
Let $P\subset\R^n$ be a polytope
defined by the affine inequalities
\begin{align*}
P=\{x\in\R^n:l_1(x)\le 0,\dots,l_m(x)\le 0\}
\end{align*}
for some $m\in\N$ and affine-linear functions $l_i(x)=\langle x,y_i\rangle+b_i$, where $y_i\in\R^n\backslash\{0\}$ and $b_i\in\R$, $i=1,\dots,m$. Then, we have
\begin{align*}
\inte P=\{x\in\R^n:l_1(x)<0,\dots,l_m(x)<0\}.
\end{align*}
\end{lemma}

\begin{proof}
Suppose $x\in\R^n$ satisfies the conditions $l_1(x)<0,\dots,l_m(x)<0$. Since the functions $l_1,\dots,l_m$ are continuous, we also have $l_1(y)<0,\dots,l_m(y)<0$ for all $y$ in some small enough neighborhood of $x$. Thus, $x$ belongs to $\inte P$.

Now let  $x\in P$ satisfy $l_i(x)=0$ for some $i\in\{1,\dots,m\}$. Then, in each neighborhood of $x$,  we can find a point $y$   with $l_i(y)>0$. This means that $x\notin  \inte P$, thus completing the proof.
\end{proof}

\begin{proof}[Proof of Theorem~\ref{theorem:normalfan_permuto_A}]
Let $x_1>\ldots>x_n$ be given. Our aim is to prove that $\cN(\cP_n^A(x_1,\dots,x_n))=\cF(\cA(A_{n-1}))$.
From Proposition~\ref{prop:faces_permuto_A} we know that each $j$-face of $\cP_n^A$, for a $j\in\{0,\dots,n-1\}$, is uniquely defined by an ordered partition $\cB=(B_1,\dots,B_{n-j})\in\cR_{n,n-j}$ of the set $\{1,\dots,n\}$ and given by
\begin{align*}
F_{\cB}=\bigg\{(t_1,\dots,t_n)\in\cP_n^A:\sum_{i\in B_1\cup\ldots\cup B_l}t_i=x_1+\ldots+x_{|B_1\cup\ldots\cup B_l|}
\text{ for all } l=1,\dots,{n-j}\bigg\}.
\end{align*}
Now, take a point $t\in\relint F_{\cB}$. We claim that $t$ satisfies the following conditions:
\begin{align}\label{eq:cond_relint_A1}
\sum_{i\in B_1\cup\ldots\cup B_l}t_i=x_1+\ldots+x_{|B_1\cup\ldots\cup B_l|}\quad\forall l=1,\dots,n-j,
\end{align}
and
\begin{align}\label{eq:cond_relint_A2}
\sum_{i\in M}t_i<x_1+\ldots+x_{|M|}\quad\forall M\subset \{1,\dots,n\}:M\notin\{B_1,B_1\cup B_2,\dots,B_1\cup\ldots\cup B_{n-j}\}.
\end{align}
To prove this, consider the affine subspace
\begin{align*}
L_\cB:=\bigg\{(t_1,\dots,t_n)\in\R^n:\sum_{i\in B_1\cup\ldots\cup B_l}t_i=x_1+\ldots+x_{|B_1\cup\ldots\cup B_l|}
\text{ for all } l=1,\dots,{n-j}\bigg\},
\end{align*}
which is of dimension $j$ since the conditions are linearly independent. Then, by Lemma~\ref{lemma:rado_A}, we can represent $F_\cB$ as  the set of points $(t_1,\dots,t_n)\in L_\cB$ such that
\begin{align*}
\sum_{i\in M}t_i\le x_1+\ldots+x_{|M|}\quad \forall M\subset \{1,\dots,n\}:M\notin\{B_1,B_1\cup B_2,\dots,B_1\cup\ldots\cup B_{n-j}\}.
\end{align*}
Since $\dim F_\cB=j$, the characterization of $\relint F_\cB$ in~\eqref{eq:cond_relint_A2} follows from Lemma~\ref{lemma:relintP} applied to the ambient affine subspace $L_\cB$ instead of $\R^n$.

Now, we want to determine the tangent cone $T_{F_\cB}(\cP_n^A)$. By definition, the tangent cone is given by
\begin{align*}
T_{F_{\cB}}(\cP_n^A)=\{v\in\R^n:t+\eps v\in\cP_n^A \text{ for some $\eps>0$}\},
\end{align*}
where $t\in\relint F_{\cB}$.
By Lemma~\ref{lemma:rado_A}, for a $v\in\R^n$, the condition $t+\eps v\in\cP_n^A$ holds for some $\eps>0$ if and only if
\begin{align*}
\sum_{i=1}^n (t_i+\eps v_i) = x_1+\ldots+x_n\quad\text{and}\quad \sum_{i\in M} (t_i+\eps v_i) \le x_1+\ldots+x_{|M|}\;\,\forall M\subset \{1,\dots,n\}.
\end{align*}
 Since $t_1+\ldots+t_n=x_1+\ldots+x_n$, the first condition is satisfied if and only if $v_1+\ldots+v_n=0$.
We observe that if we choose $\eps>0$ small enough, the second condition is satisfied for all sets $M\subset\{1,\dots,n\}$ such that $M\notin\{B_1,B_1\cup B_2,\dots,B_1\cup\ldots\cup B_{n-j}\}$, due to~\eqref{eq:cond_relint_A2}. For the sets $B_1,B_1\cup B_2,\dots,B_1\cup\ldots\cup B_{n-j}$, we obtain that
\begin{align*}
\sum_{i\in B_1\cup\ldots\cup B_l}v_i\le 0,
\end{align*}
by~\eqref{eq:cond_relint_A1}. Therefore, the tangent cone is given by
\begin{align*}
T_{F_{\cB}}(\cP_n^A)=\bigg\{v\in\R^n:v_1+\ldots+v_n=0,\;\sum_{i\in B_1\cup\ldots\cup B_l}v_i\le 0\;\forall l=1,\dots,n-j-1\bigg\}.
\end{align*}
Thus, the corresponding normal cone is given by
\begin{align*}
N_{F_\cB}(\cP_n^A)=T_{F_{\cB}}(\cP_n^A)^\circ=\{x\in\R^n:\forall 1\le l_1\le l_2\le n-j\;\forall i_1\in B_{l_1},i_2\in B_{l_2}, \text{ we have } x_{i_1}\ge x_{i_2}\}.
\end{align*}
Note that the conditions of $N_{F_\cB}(\cP_n^A)$ imply $x_{i_1}=x_{i_2}$ for all $i_1,i_2\in B_l$, $l=1,\dots,n-j$. The cone $N_{F_\cB}(\cP_n^A)$ is an $(n-j)$-dimensional cone in the fan $\cF(\cA(A_{n-1}))$ and it is easy to check that, going through all ordered partitions $\cB\in\cR_{n,n-j}$, we obtain all $(n-j)$-dimensional cones of the fan $\cN(\cA(A_{n-1}))$; see, e.g., \cite[Section 2.7]{KVZ2019_multi_analogue}. This completes the proof.
\end{proof}

\begin{proof}[Proof of Theorem~\ref{theorem:normalfan_permuto_B}]
Fix some $x_1>\ldots>x_n>0$. Our aim is to prove that $\cN(\cP_n^B(x_1,\dots,x_n))=\cF(\cA(B_n))$.
From Proposition~\ref{prop:faces_permuto_B} we know that each $j$-face of $\cP_n^B$, for a $j\in\{0,\dots,n\}$, is uniquely defined by a pair $(\cB,\eta)\in\cT_{n,n-j}$, where $\cB=(B_1,\dots,B_{n-j+1})$, and given by
\begin{align*}
F_{\cB,\eta}=\bigg\{(t_1,\dots,t_n)\in\cP_n^B:\sum_{i\in B_1\cup\ldots\cup B_l}\eta_it_i=x_1+\ldots+x_{|B_1\cup\ldots\cup B_l|}\;\forall l=1,\dots,{n-j}\bigg\}.
\end{align*}
Now, we claim that
\begin{align}\label{eq:claim_tangent_cone_B}
T_{F_{\cB,\eta}}(\cP_n^B)=\bigg\{v\in\R^n:\sum_{i\in B_1\cup\ldots\cup B_l}\eta_iv_i\le 0\;\forall l=1,\dots,n-j\bigg\}.
\end{align}
To prove this, take a point $t\in\relint F_{\cB,\eta}$. Then,
\begin{align}\label{eq:cond_relint_B1}
\sum_{i\in B_1\cup\ldots\cup B_l}\eta_it_i=x_1+\ldots+x_{|B_1\cup\ldots\cup B_l|}\quad\forall l=1,\dots,n-j
\end{align}
and
\begin{align}\label{eq:cond_relint_B2}
\sum_{i\in M}|t_i|<x_1+\ldots+x_{|M|}\quad\forall M\subset \{1,\dots,n\}:M\notin\{B_1,B_1\cup B_2,\dots,B_1\cup\ldots\cup B_{n-j}\}.
\end{align}
This can  easily be justified in the same way as in the $A$-case using Lemmas~\ref{lemma:rado_B} and~\ref{lemma:relintP}. Note that~\eqref{eq:cond_relint_B1} implies that $\sgn t_i=\eta_i$ for all $i\in B_1\cup\ldots\cup B_{n-j}$ such that $t_i\neq 0$. Otherwise, if $\eta_{i_0}=-\sgn t_{i_0}$ for some $i_0\in\{1,\dots,n\}$ with $t_{i_0}\neq 0$, we would have
\begin{align*}
\sum_{B_1\cup\ldots\cup B_l}|t_i|>\sum_{B_1\cup\ldots\cup B_l}\eta_it_i=x_1+\ldots+x_{|B_1\cup\ldots\cup B_l|}
\end{align*}
for some $l\in\{1,\dots,n-j\}$ in contradiction to $t\in \cP_n^B$.

Now, recall that the tangent cone is defined by
\begin{align*}
T_{F_{\cB,\eta}}(\cP_n^B)=\{v\in\R^n:t+\eps v\in\cP_n^B \text{ for some $\eps>0$}\}
\end{align*}
for $t\in \relint F_{\cB,\eta}$. In view of the characterization of points in $\cP_n^B$ stated in Lemma~\ref{lemma:rado_B}, it follows  that $v\in T_{F_{\cB,\eta}}(\cP_n^B)$ if and only if there exists an $\eps>0$ such that
\begin{align*}
\sum_{i\in M}|t_i+\eps v_i|\le x_1+\ldots+x_{|M|}\quad\forall M\subset\{1,\dots,n\}.
\end{align*}
For all $M\subset\{1,\dots,n\}$ with $M\notin\{B_1,B_1\cup B_2,\dots,B_1\cup\ldots\cup B_{n-j}\}$ this condition is satisfied due to~\eqref{eq:cond_relint_B2} provided $\eps>0$ is small enough.
If $t_i\neq 0$ for all $i\in B_1\cup\ldots \cup B_{n-j}$, the remaining conditions are equivalent to
\begin{align}\label{eq:condB2}
\sum_{i\in B_1\cup \ldots \cup B_l} \eta_i(t_i+\eps v_i)\le x_1+\ldots+x_{|B_1\cup\ldots\cup B_l|}
\quad
\forall l=1,\ldots,n-j.
\end{align}
 This follows from the fact that $\sgn(t_i+\eps v_i)=\sgn t_i=\eta_i$ for $\eps>0$ chosen small enough. By~\eqref{eq:cond_relint_B1}, we obtain
\begin{align*}
\sum_{i\in B_1\cup\ldots\cup B_l}\eta_iv_i\le 0,
\end{align*}
for all $l=1,\dots,n-j$.  This proves~\eqref{eq:claim_tangent_cone_B}. At this point, it remains to prove that $t_i\neq 0$ for all $i\in B_1\cup\ldots\cup B_{n-j}$. To this end, assume that $t_i=0$ for some $i\in B_l$ and some $l\in\{1,\dots,n-j\}$. Defining $D_i:=(B_1\cup\ldots\cup B_l)\backslash \{i\}$, we have
\begin{align*}
\sum_{j\in D_i}\eta_jt_j=\sum_{j\in B_1\cup\ldots\cup B_l}\eta_jt_j=x_1+\ldots+x_{|B_1\cup\ldots\cup B_l|},
\end{align*}
due to~\eqref{eq:cond_relint_B1}. If $D_i=B_1\cup\ldots\cup B_m$ for some $m<l$, we obtain
\begin{align*}
x_1+\ldots+x_{|B_1\cup\ldots\cup B_m|}=\sum_{j\in D_i}\eta_jt_j=x_1+\ldots+x_{|B_1\cup\ldots\cup B_l|},
\end{align*}
in contradiction to $x_i>0$ for all $i=1,\dots,n$. If, on the other hand, $D_i\neq B_1\cup\ldots\cup B_m$ for all $m<l$, we have
\begin{align*}
x_1+\ldots+x_{|B_1\cup\ldots\cup B_l|}=\sum_{j\in D_i}\eta_jt_j<x_1+\ldots+x_{|D_i|},
\end{align*}
by~\eqref{eq:cond_relint_B2}. This is a contradiction to $D_i\subset B_1\cup\ldots\cup B_l$ proving that $t_i\neq 0$ for all $i\in B_1\cup\ldots\cup B_{n-j}$.

Thus, the normal cone of $\cP_n^B$ at $F_{\cB,\eta}$ is given by
\begin{align*}
N_{F_{\cB,\eta}}(\cP_n^B)
&	=T_{F_{\cB,\eta}}(\cP_n^B)^\circ\\
&	=\big\{x\in\R^n:\forall 1\le l_1\le l_2\le n-j\;\forall i_1\in B_{l_1},i_2\in B_{l_2}, \text{ we have } \eta_{i_1}x_{i_1}\ge \eta_{i_2}x_{i_2}\geq 0;\\
&\hspace{2,23cm}\forall i\in B_{n-j+1}\text{ we have }x_i=0\big\}.
\end{align*}
The cone $N_{F_\cB}(\cP_n^B)$ is an $(n-j)$-face of a Weyl chamber of type $B_n$ and we can observe that, going through all pairs $(\cB,\eta)\in\cT_{n,n-j}$, we obtain all $(n-j)$-dimensional cones of the fan $\cN(\cA(B_n))$; see, e.g.,~\cite[Section~2.4]{KVZ2019_multi_analogue}. This completes the proof.
\end{proof}


\subsection{Permutohedra and zonotopes: Proof of Theorem~\ref{theo:permuto_vs_zonotopes}}\label{subsec:proofs_permotohedra_as_zonotopes}
We will prove both the $A$- and the $B$-case together and assume that $x_1>\ldots>x_n$ and $x_1>\ldots>x_n>0$, respectively. In the book of Ziegler~\cite[Example~7.15]{Ziegler_LecturesOnPolytopes}, it is shown  that $\cP_n^A(n,n-1,\dots,1)$ is a zonotope. By shifting and rescaling, we obtain that
\begin{align*}
\cP_n^A\big(a+(n-1)b,a+(n-2)b,\dots,a+b,a\big)
\end{align*}
is also a zonotope for each $a\in\R$ and $b>0$. Similarly, we can also prove that $\cP_n^B(n,n-1,\dots,1)$ is a zonotope and therefore also $\cP_n^B(a+(n-1)b,a+(n-2)b,\dots,a+b,a)$ for each $a>0$ and $b>0$. This follows from the representation of $\cP_n^B(n,n-1,\dots,1)$ as the following Minkowski sum of line segments:
\begin{align*}
\cP_n^B(n,n-1,\dots,1)
&	=\sum_{1\le i<j\le n}\bigg[-\frac{e_i-e_j}{2},\frac{e_i-e_j}{2}\bigg]+\sum_{1\le i<j\le n}\bigg[-\frac{e_i+e_j}{2},\frac{e_i+e_j}{2}\bigg]+ \sum_{1\le i\le n}[-e_i,e_i].
\end{align*}
To prove this, we observe that this Minkowski sum is invariant under signed permutations of the coordinates. Additionally, we can compute the vertices of this Minkowski sum, that is, the points of the Minkowski sum that maximize a linear function $v\mapsto \langle c,v\rangle$, $\R^n\to\R$,  for a vector $c\in\R^n$, provided the maximizer is unique. Applying a signed permutation, we may assume that $c_1\geq c_{2} \geq \ldots \geq c_n\geq 0$. On the line segment $[-\frac{e_i-e_j}{2},\frac{e_i-e_j}{2}]$, the function $v\mapsto \langle c,v\rangle$ is uniquely maximized by the right-hand boundary $\frac{e_i-e_j}{2}$ provided $c_i > c_j$. For $c_i=c_j$, the maximizer is not unique. Therefore, we may assume that $c_1>c_{2} > \ldots > c_n> 0$.   Then, the unique maximizer of $v\mapsto \langle c,v\rangle$ is given by the sum of the right-hand boundaries of the line segments:
\begin{align*}
v
&	=\sum_{1\le i<j\le n}\frac{e_i-e_j}{2}+\sum_{1\le i<j\le n}\frac{e_i+e_j}{2}+\sum_{1\le i\le n}e_i\\
&	=\sum_{1\le i<j\le n}e_i+\sum_{1\le i\le n}e_i\\
&	=(n,n-1,\dots,1)^\top.
\end{align*}
Hence, the vertices of the above Minkowski sum have the form $(\eps_1\sigma(n),\eps_2\sigma(n-1),\dots,\eps_n\sigma(1))$, for all $\eps\in\{\pm 1\}^n, \sigma\in\text{Sym}(n)$.
This proves the representation of $\cP_n^B(a+(n-1)b,a+(n-2)b,\dots,a+b,a)$ as the Minkowski sum of certain line segments. In particular, this polytope  is a zonotope.

To prove the other direction, assume that $\cP_n^A(x_1,\dots,x_n)$ with $n\geq 3$ is a zonotope and use that a polytope $P$ is a zonotope if and only if every $2$-dimensional face of $P$ is centrally symmetric~\cite[p.~200]{Ziegler_LecturesOnPolytopes}. From Proposition~\ref{prop:faces_permuto_A} we know that the convex hull $F$ of the six points
\begin{align*}
(x_{\sigma(1)},x_{\sigma(2)},x_{\sigma(3)},x_4,x_5,\dots,x_n),\quad\sigma\in\text{Sym}(3),
\end{align*}
is a $2$-face of $\cP_n^A(x_1,\dots,x_n)$. This face is centrally symmetric around some $a=(a_1,\dots,a_n)$. This means that for each vertex $z$ of $F$, also $2a-z$ is a vertex of $F$. Thus, we obtain the conditions
\begin{align*}
2a_1-x_1,2a_1-x_2,2a_1-x_3\in\{x_1,x_2,x_3\}.
\end{align*}
From $x_1>x_2>x_3$, we obtain
\begin{align*}
2a_1-x_1=x_3\quad\text{and}\quad 2a_1-x_2=x_2
\end{align*}
and therefore also $x_1+x_3=2x_2$. This yields $x_3-x_2=x_2-x_1$. Analogously, by considering more general $2$-faces of $\cP_n^A(x_1,\dots,x_n)$, one proves that $x_{j+1}-x_j=x_j-x_{j-1}$ for all admissible $j$. Thus, $x_1,\dots,x_n$ are in arithmetic progression.

The proof that $x_1,\dots,x_n$ form an  arithmetic progression if $\cP_n^B(x_1,\dots,x_n)$ is a zonotope follows in the same way as in the $A$-case since the $2$-faces of $\cP_n^A$ considered above are also $2$-faces of $\cP_n^B$ according to Proposition~\ref{prop:faces_permuto_B}.
\hfill $\Box$

\vspace*{0.5cm}
\bibliography{bib}
\bibliographystyle{abbrv}

\subsection*{Statements and Declarations}
Supported by the German Research Foundation under Germany’s Excellence Strategy EXC 2044 – 390685587, Mathematics M\"unster: Dynamics - Geometry - Structure and by the DFG priority program SPP 2265 Random Geometric Systems. The authors have no relevant financial or non-financial interests to disclose. We are grateful to the unknown referees for useful suggestions and, in particular, for bringing the concept of belt polytopes to our attention.

\subsection*{Data availability}
Data sharing is not applicable to this article as no datasets were generated or analysed during the current study.



\end{document}